\theoremstyle{plain}
\newtheorem{theorem}{Theorem}[section]
\newtheorem{proposition}[theorem]{Proposition}
\newtheorem{lemma}[theorem]{Lemma}
\theoremstyle{definition}
\newtheorem{definition}[theorem]{Definition}
\newcommand{\C}{\mathbb{C}}
\newcommand{\Q}{\mathbb{Q}}
\newcommand{\Z}{\mathbb{Z}}
\newcommand{\F}{\mathbb{F}}
\let\H\relax
\newcommand{\H}{\mathcal{H}}
\DeclareMathOperator{\tr}{tr}
\DeclareMathOperator{\SL}{SL}
\DeclareMathOperator{\Aut}{Aut}
\DeclareMathOperator{\lcm}{lcm}
\newcommand{\legendre}[2]{\ensuremath{\left( \frac{#1}{#2} \right) }}
\title{Sums of Hurwitz class numbers, CM modular forms, and primes of the form $x^2+ny^2$}
\author{Mikul\'{a}\v{s} Zindulka}
\date{\today}
\address{Charles University, Faculty of Mathematics and Physics, Department of Algebra,
Sokolovsk\'{a} 83, 186 75 Praha 8, Czech Republic}
\email{mikulas.zindulka@matfyz.cuni.cz}
\subjclass[2020]{11E41, 11F11, 11F27, 11F37, 11G05}
\keywords{Hurwitz class number, modular form, complex multiplication.}
\thanks{The author acknowledges support by Czech Science Foundation (GA\v{C}R) grant 21-00420M, Charles University project PRIMUS/24/SCI/010, GAUK project No. 134824, and the Charles University Research Centre program UNCE/24/SCI/022.}
\begin{document}

\begin{abstract}
We consider sums of Hurwitz class numbers of the type $ \sum_{t \equiv m \pmod{M}}H(4p-t^2) $, where $ M $ is composite. For $ M = 6 $ and $ 8 $, we show that these sums can be expressed in terms of coefficients of CM cusp forms. This leads to explicit formulas which depend on the expression of $ p $ in the form $ x^2+ny^2 $.
\end{abstract}

\maketitle

\section{Introduction}
\label{secIntro}

Modular forms found many applications in and outside of number theory. Given an arithmetic function $ a(n) $ for $ n \in \Z_{\geq 0} $, let $ f(\tau) := \sum_{n = 0}^\infty a(n)q^n $ be its generating function, where $ q = e^{2\pi i \tau} $. Provided that the series converges for $ \tau $ in the upper half-plane $ \mathbb{H} $, we may hope that $ f $ is a modular form, or that it at least satisfies a weight $ \kappa $ modularity for some $ \kappa \in \frac{1}{2}\Z $.

The example relevant for this paper is the generating function
\[
	\H(\tau) := \sum_{n=0}^\infty H(n)q^n,
\]
where $ H(n) $ is the $ n $th \emph{Hurwitz class number} (defined in \Cref{secPrelim}). It turns out that $ \H $ is not a modular form but a weight $ \frac{3}{2} $ mock modular form. Mock modular forms have been the subject of an ongoing research interest, see for example \cite{BO1, BO2, BF, Zw}.

One motivation for studying Hurwitz class numbers comes from their connection to elliptic curves. If $ p $ is a prime, $ r \in \Z_{\geq 1} $, and $ E $ is an elliptic curve over the finite field $ \F_{p^r} $, then its \emph{trace of Frobenius} is defined by
\[
	\tr(E) := p^r+1-\#E\left(\F_{p^r}\right).
\]
By the Hasse bound, $ |\tr(E)| \leq 2\sqrt{p^r} $. When counting elliptic curves, it is quite natural to weight $ E $ by $ 1 $ over the size of its automorphism group. Thus, for $ t \in \Z $ we let
\[
	N_A(p^r; t) := \sum_{\substack{E/\F_{p^r}\\\tr(E) = t}}\frac{1}{\#\Aut_{\F_{p^r}}(E)},
\]
where the sum is taken over all elliptic curves over $ \F_{p^r} $ with trace of Frobenius equal to $ t $. If $ p \geq 3 $, $ |t| \leq 2\sqrt{p^r} $ and $ p \nmid t $, then by \cite[Theorem 3]{KP18},
\[
	2 N_A(p^r; t) = H(4p^r-t^2).
\]
The last equality can be used to show the identity
\[
	\sum_{t \in \Z}H(4p-t^2) = 2p
\]
for a prime $ p $ \cite[p. 154]{Ei}.

More generally, one can consider the sums
\[
	H_{m,M}(n) := \sum_{\substack{t \in \Z\\t \equiv m \pmod{M}}}H(4n-t^2),
\]
where $ m \in \Z $ and $ M \in \Z_{\geq 1} $. In \cite{Br}, Brown et al. proved explicit formulas for $ H_{m,M}(p) $ when $ p $ is a prime and $ M \in \{2,3,4\} $. They also conjectured a formula for $ M = 5 $ and gave an incomplete formula for $ M = 7 $. For example, if $ M = 5 $ and $ m = 0 $, then
\[
	H_{0,5}(p) = \begin{cases}
		\frac{p+1}{2}&\text{if }p \equiv 1 \pmod{5},\\
		\frac{p+1}{3}&\text{if }p \equiv 2, 3 \pmod{5},\\
		\frac{p-3}{2}&\text{if }p \equiv 4 \pmod{5}.
	\end{cases}
\]
Bringmann and Kane \cite{BK} developed a unified approach to these sums for $ M $ prime, proved the conjecture for $ M = 5 $ and a complete formula for $ M = 7 $. Their technique was further simplified by Mertens \cite{Me2} as an application of his development of the theory of Rankin-Cohen brackets of weight $ \frac{3}{2} $ mock modular forms and weight $ \frac{1}{2} $ modular forms.

Besides sums of Hurwitz class numbers, one can also consider higher moments. For $ \kappa \in \Z_{\geq 0} $, $ m \in \Z $, $ M \in \Z_{\geq 1} $, and $ n \in \Z_{\geq 0} $, let
\[
	H_{\kappa,m,M}(n) := \sum_{\substack{t \in \Z\\t \equiv m \pmod{M}}}H(4n-t^2)t^\kappa.
\]

Kane and Pujahari gave an asymptotic formula for the even moments $ H_{2k,m,M}(n) $ expressed in terms of the zeroth moment \cite[Theorem 1.1]{KP22}, which they subsequently applied to elliptic curves, namely the weighted even moments with respect to the trace of Frobenius \cite[Theorem 1.2]{KP22}. Odd moments were similarly treated in \cite[Theorem 1.4]{BKP}.

In this paper, we find explicit formulas for $ H_{m,M}(p) $ for the composite moduli $ M = 6 $ and $ M = 8 $. Let us state our results for $ m = 0 $. The following is a famous fact from elementary number theory \cite[p. 7]{Co}: If $ p $ is a prime, then
\begin{alignat*}{4}
	p = x^2+y^2,\quad x, y \in \Z &\qquad\Longleftrightarrow\qquad&&p = 2\text{ or }p \equiv 1 \pmod{4},\\
	p = x^2+2y^2,\quad x, y \in \Z&\qquad\Longleftrightarrow\qquad&&p = 2\text{ or }p \equiv 1, 3 \pmod{8},\\
	p = x^2+3y^2,\quad x, y \in \Z&\qquad\Longleftrightarrow\qquad&&p = 3\text{ or }p \equiv 1 \pmod{3}.
\end{alignat*}
If $ p $ is an odd prime such that $ p = x_0^2+y_0^2 $ with $ x_0, y_0 \in \Z $, then exactly one of $ x_0, y_0 $ is odd, hence $ p $ is also of the form $ x^2+4y^2 $. If we require $ x, y \in \Z_{\geq 0} $, then this expression is unique. More generally, if $ n \in \Z_{\geq 2} $ and $ p $ is an odd prime of the form $ x^2+ny^2 $, then there exists a unique solution to the equation $ p = x^2+ny^2 $ with $ x, y \in \Z_{\geq 0} $ \cite[Exercise 3.23]{Co}.

\begin{theorem}
\label{thmH06}
Let $ p \geq 5 $ be a prime. For $ p \equiv 1 \pmod{3} $, let $ p = x^2+3y^2 $, where $ x, y \in \Z $. If $ \chi_{-3} $ denotes the non-principal character modulo $ 3 $, then
\[
	H_{0,6}(p) = \begin{cases}
		\frac{p+1}{3}+\frac{1}{3}\chi_{-3}(x)x&\text{if }p \equiv 1 \pmod{3},\\
		\frac{2p-4}{3}&\text{if }p \equiv 2 \pmod{3}.
	\end{cases}
\]
\end{theorem}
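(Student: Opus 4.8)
The plan is to follow the circle of ideas of \cite{BK,Me2}: express $\sum_{n\ge 0}H_{0,6}(n)q^n$, up to an explicit elementary correction, as a holomorphic modular form of weight $2$, and then split that form into an Eisenstein part (which supplies the linear-in-$p$ main terms) and a cuspidal part, which will be a CM form (supplying the $\chi_{-3}(x)x$ term). For the reduction, observe that $t\equiv 0\pmod 6$ forces $t$ even, so with $t=6u$ one has $4n-t^2=4(n-9u^2)$ and therefore
\[
	\sum_{n\ge 0}H_{0,6}(n)q^n\;=\;\Big(\sum_{u\in\Z}q^{9u^2}\Big)\Big(\sum_{k\ge 0}H(4k)q^k\Big)\;=\;\theta(9\tau)\,\big(\H\,|\,U_4\big)(\tau),
\]
where $\theta(\tau):=\sum_{u\in\Z}q^{u^2}$. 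The factor $\H\,|\,U_4$ is the holomorphic part of a weight-$\tfrac32$ harmonic Maass form, and $\theta(9\tau)$ a weight-$\tfrac12$ holomorphic modular form of level $36$; multiplying the completed functions and applying Mertens' holomorphic projection \cite{Me2} produces a weight-$2$ quasimodular form $F$ on some $\Gamma_0(N)$ with $N\mid 144$ (quasimodular only through a controlled multiple of $E_2$) whose $n$th Fourier coefficient equals $H_{0,6}(n)$ plus the contribution coming from the non-holomorphic Eichler integral of $\theta$ in the completion of $\H$. For $n=p$ prime with $p\ge 5$ this contribution collapses to an explicit constant $c$, so $a_p(F)=H_{0,6}(p)+c$.

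Decompose $F=E+S$ with $E$ in the weight-$2$ Eisenstein subspace of $\Gamma_0(N)$ and $S\in S_2(\Gamma_0(N))$. Since $3\mid N$, the relevant Eisenstein series are the $E_2(\delta\tau)$ for $\delta\mid 36$ together with the Eisenstein series carrying nebentypus $\chi_{-3}$; for a prime $p\nmid N$ the $p$th coefficient of each of these is an explicit rational-linear combination of $1$, $p$, $\chi_{-3}(p)$, and $\chi_{-3}(p)p$. Reading off the constant terms of $F$ at the cusps of $\Gamma_0(N)$ fixes $E$, and after subtracting $c$ one obtains precisely $\tfrac{p+1}{3}$ when $\chi_{-3}(p)=1$ (that is, $p\equiv 1\pmod 3$) and $\tfrac{2p-4}{3}$ when $\chi_{-3}(p)=-1$ (that is, $p\equiv 2\pmod 3$); the latter case is then complete once we also know $a_p(S)=0$ there.

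For the cuspidal part the claim is that $a_p(S)=\tfrac16\,a_p\!\big(\eta(6\tau)^4\big)$ for every prime $p\ge 5$, where $\eta(6\tau)^4$ is the unique normalized newform of $S_2(\Gamma_0(36))$, a CM form attached to $\Q(\sqrt{-3})$. The key point is the identity
\[
	\eta(6\tau)^4\;=\;\Big(\sum_{x\ge 1}\chi_{-3}(x)\,x\,q^{x^2}\Big)\theta(3\tau),
\]
which exhibits $\eta(6\tau)^4$ as a weighted theta series of the quadratic form $x^2+3y^2$; hence $a_p(\eta(6\tau)^4)=\sum\chi_{-3}(x)x$, the sum being over all $x\ge 1$ and $y\in\Z$ with $x^2+3y^2=p$. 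If $p\equiv 1\pmod 3$ then $p=x^2+3y^2$ has a solution unique up to the signs of $x$ and $y$, with $xy\ne 0$, so $a_p(\eta(6\tau)^4)=2\chi_{-3}(x)x$ (well defined because $\chi_{-3}(x)x$ is unchanged by $x\mapsto -x$); if $p\equiv 2\pmod 3$ then $p$ is inert in $\Q(\sqrt{-3})$ and unrepresented, so $a_p(\eta(6\tau)^4)=0$. Combining $a_p(F)=a_p(E)+a_p(S)$ with the previous paragraph and subtracting $c$ gives the stated formula.

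The routine-looking but technically delicate part is the holomorphic projection in the first step: one must extract the $E_2$-defect and the precise Eichler-integral term so that both $c$ and the Eisenstein part $E$ are unambiguously determined — essentially the computations of \cite{Me2,BK} performed at level dividing $144$. The genuinely new difficulty, absent when the modulus is prime, is the identity $a_p(S)=\tfrac16 a_p(\eta(6\tau)^4)$: one has to exclude any contribution to prime-indexed coefficients from other newforms — in particular from forms with CM by $\Q(i)$ or $\Q(\sqrt{-2})$, or from non-CM forms, which would otherwise bring in spurious $x^2+y^2$- or $x^2+2y^2$-terms or further characters. I would settle this by pinning down $F$ explicitly inside the finite-dimensional space of weight-$2$ quasimodular forms on $\Gamma_0(N)$ through comparison of Fourier coefficients up to the Sturm bound (and proving the displayed eta-product identity the same way); equivalently, one can extract $a_p(\eta(6\tau)^4)$ from the Eichler--Selberg trace formula for the Hecke operator $T_p$ on the one-dimensional space $S_2(\Gamma_0(36))$. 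These finite verifications, together with the elementary arithmetic of the earlier steps, complete the argument.
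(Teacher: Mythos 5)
Your proposal is correct and follows essentially the same route as the paper: holomorphic projection of the completed $\widehat{\H}$ against a theta function yields a weight-$2$ quasimodular form on $\Gamma_0(144)$ whose prime-indexed coefficients equal $H_{0,6}(p)$ plus an explicit elementary constant, and this form decomposes (verified by comparing coefficients up to the Sturm bound) into an Eisenstein part giving the linear terms and $\tfrac16\Psi_3(\chi_{-3},\cdot)=\tfrac16\eta(6\tau)^4$, whose $p$-th coefficient is the weighted theta sum $2\chi_{-3}(x)x$. The differences are only cosmetic: the paper writes the generating function as $(\H\theta_{0,6})\vert U_4$ twisted by the principal character modulo $6$ and pins down the entire identity at once by checking $48$ coefficients, rather than fixing the Eisenstein part via constant terms at the cusps.
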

\begin{proof}
This is a special case of \Cref{thmH6}.
\end{proof}

\begin{theorem}
\label{thmH08}
Let $ p \geq 3 $ be a prime. For $ p \equiv 1 \pmod{4} $, let $ p = x^2+4y^2 $, where $ x, y \in \Z $. If $ \chi_{-4} $ denotes the non-principal character modulo $ 4 $, then
\[
	H_{0,8}(p) = \begin{cases}
		\frac{p+1}{4}+\frac{1}{2}\chi_{-4}(x)x&\text{if }p \equiv 1 \pmod{4},\\
		\frac{p+1}{3}&\text{if }p \equiv 3 \pmod{8},\\
		\frac{p-3}{2}&\text{if }p \equiv 7 \pmod{8}.
	\end{cases}
\]
\end{theorem}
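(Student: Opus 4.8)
The plan is to realize the generating function of $H_{0,8}(n)$ (suitably defined as a $q$-series in $n$) as a weight $2$ modular form, decompose it into Eisenstein and cuspidal parts, and identify the cuspidal contribution with a CM newform whose coefficients encode the representation $p = x^2 + 4y^2$. Concretely, I would start from Mertens' framework (the Rankin–Cohen bracket $[\H, \theta]$ of the weight $\tfrac32$ mock modular form $\H$ with a weight $\tfrac12$ theta series), which already produces, after adding a suitable combination of Eisenstein series to cancel the non-holomorphic completion, a holomorphic modular form of weight $2$ on some $\Gamma_0(N)$ whose $p$-th coefficient equals a linear combination of the sums $H_{m,M}(p)$. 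To isolate $M = 8$ one should work on $\Gamma_0(256)$ (or a divisor thereof, possibly with a character), since detecting $t \bmod 8$ requires theta series of level $64$ and the relevant forms $H(4n - t^2)$ contribute at level dividing $4 \cdot 64$.

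The key steps, in order: (1) set up the congruence-extraction: write $\sum_{t \equiv 0 \,(8)} H(4n - t^2) q^n$ as an explicit $\C$-linear combination of Rankin–Cohen-type products of $\H$ (and its unary theta twins) with weight $\tfrac12$ theta series of level $64$, using roots of unity $\zeta_8$ to pick out the residue class, exactly as in Bringmann–Kane and Mertens but now with a non-prime modulus; (2) invoke the relevant modularity result (stated earlier, via \Cref{thmH6}'s machinery or its analogue) to conclude the resulting $q$-series is a holomorphic weight $2$ form $g$ on $\Gamma_0(256)$; (3) compute $\dim M_2(\Gamma_0(256))$ and, more importantly, its new and old cuspidal subspaces, and pin down which Hecke eigenforms can appear — here the arithmetic of $x^2 + 4y^2$ signals that the CM form attached to a Hecke character of $\Q(i)$ of conductor dividing $(8)$ is the one carrying the $\chi_{-4}(x)x$ term; (4) match finitely many Fourier coefficients (enough to exceed the Sturm bound for level $256$ weight $2$) to determine $g$ exactly as Eisenstein part plus an explicit rational multiple of that CM newform; (5) read off the $p$-th coefficient case by case on $p \bmod 8$, using that the CM form vanishes on primes inert in $\Q(i)$ (i.e. $p \equiv 3 \pmod 4$) and that for $p \equiv 1 \pmod 4$ its coefficient is $2x$ with the sign normalization $x \equiv 1 \pmod 4$ (equivalently $\chi_{-4}(x) x$), while the Eisenstein coefficient supplies the $\tfrac{p+1}{4}$, $\tfrac{p+1}{3}$, $\tfrac{p-3}{2}$ main terms depending on the splitting of $p$ in the quadratic fields $\Q(i)$, $\Q(\sqrt{-2})$, $\Q(\sqrt 2)$ that govern level $8$.

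The main obstacle I anticipate is step (3)–(4): controlling the full cuspidal space at level $256$ and verifying that \emph{no other} newforms (e.g. non-CM forms, or CM forms from $\Q(\sqrt{-2})$) contribute to the particular linear combination that produces $H_{0,8}$. This requires either an explicit basis computation with careful bookkeeping of oldforms from levels $32, 64, 128$, or a representation-theoretic argument showing the Rankin–Cohen construction lands in a small, identifiable Hecke-stable subspace. A secondary subtlety is fixing the sign of $x$ correctly: since $p = x^2 + 4y^2$ determines $x$ only up to sign, the factor $\chi_{-4}(x)x$ must be shown to be well-defined (it is, because $\chi_{-4}(-x)(-x) = \chi_{-4}(x)x$), and one must check the CM newform's normalization reproduces exactly this well-defined quantity with the right rational scalar ($\tfrac12$) in front. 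Once the form $g$ is identified, the rest is the routine Sturm-bound coefficient comparison and splitting-of-primes bookkeeping; I would defer the $m \neq 0$ cases to the general \Cref{thmH6}-type statement and treat $m=0$ here only as the clean illustrative instance.
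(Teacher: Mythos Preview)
Your proposal is essentially the paper's approach: apply the Mertens/Kane--Pujahari Rankin--Cohen identity (\Cref{lemmaHLambda}) at $M=8$, twist by the principal character $\chi_{8,0}$ to land in $M_2(\Gamma_0(256))$, and then verify an explicit decomposition into sieved divisor sums $D\vert S_{M,m}$ plus $\tfrac14\Psi_4(\chi_{-4},\cdot)$ (the CM newform \texttt{64.2.a.a} on $\Gamma_0(64)$) by checking the first $64$ coefficients against the Sturm bound. Two minor corrections to your sketch: the obstacle you flag in steps (3)--(4) is bypassed entirely in the paper by simply writing down the candidate identity and checking Sturm-many coefficients, so no structural analysis of $S_2(\Gamma_0(256))$ or exclusion of $\Q(\sqrt{-2})$-CM forms is needed; and the value $\tfrac{p-3}{2}$ for $p\equiv 7\pmod 8$ is not a pure Eisenstein coefficient but rather $\tfrac12(p+1)-2$, where the $-2$ arises from the $\Lambda$-correction term $2G_{1,1,4}\vert S_{8,7}$ (computed explicitly via \Cref{lemmaLambda1,lemmaLambda2,lemmaLambda4}), which contributes $2$ at any prime $p\equiv 7\pmod 8$.
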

\begin{proof}
This is a special case of \Cref{thmH81}.
\end{proof}
We prove formulas of this type for all other $ m $. It is interesting to note that for $ M \leq 5 $, the formulas are linear in $ p $, while in our theorems we have the ``error terms" $ \frac{1}{3}\chi_{-3}(x)x $ and $ \frac{1}{2}\chi_{-4}(x)x $ of size $ O(\sqrt{p}) $. As will become clear from the proof, they appear as coefficients of CM cusp forms. For example, $ \frac{1}{3}\chi_{-3}(x)x $ comes from $ \frac{1}{6}\Psi_3(\chi_{-3},\tau) $, where
\[
	\Psi_3(\chi_{-3},\tau) := \frac{1}{2}\sum_{n=1}^\infty\left(\sum_{x^2+3y^2 = n}\chi_{-3}(x)x\right)q^n = q-4q^7+2q^{13}+8q^{19}+O\left(q^{20}\right)
\]
(the inner sum runs over all $ x, y \in \Z $ such that $ x^2+3y^2 = n $). This is a newform in the space $ S_2(\Gamma_0(36)) $ with complex multiplication by $ \Q(\sqrt{-3}) $, as we show in \Cref{lemmaCM}. A similar situation occurs for $ M = 7 $: the sum $ H_{0,7}(p) $ depends on the coefficient of a newform in $ S_2(\Gamma_0(49)) $ with complex multiplication by $ \Q(\sqrt{-7}) $ \cite[Corollary 4.5]{BK}.

The rest of the paper is organized as follows. In \Cref{secPrelim}, we review the preliminaries about Hurwitz class numbers, modular forms, and CM forms. \Cref{secAux} contains auxiliary lemmas. The explicit formulas for $ H_{m,M}(p) $ are proved in \Cref{secExp}.

\section*{Acknowledgments}
This work was accomplished during a research visit to Professor Ben Kane at the University of Hong Kong. I would like to thank the Department of Mathematics for their hospitality and Ben Kane for his invitation and countless helpful suggestions.

\section{Preliminaries}
\label{secPrelim}

We begin by defining Hurwitz class numbers. Let $ \mathcal{Q}_D $ be the set of binary integral quadratic forms of discriminant $ D $. For $ Q \in \mathcal{Q}_D $, let $ \Gamma_Q $ be the stabilizer group of $ Q $ in $ \SL_2(\Z) $.

\begin{definition}
\label{defH}
Let $ D \in \Z_{<0} $, $ D \equiv 0,1 \pmod{4} $. The $ |D| $-th \emph{Hurwitz class number} is defined by
\[
	H(|D|) = \sum_{Q \in \mathcal{Q}_D/\SL_2(\Z)}\frac{1}{\omega_Q},
\]
where
\[
	\omega_Q := \frac{|\Gamma_Q|}{2} = \begin{cases}
		2&\text{if }Q = a(x^2+y^2),\\
		3&\text{if }Q = a(x^2+xy+y^2),\\
		1&\text{otherwise}.
	\end{cases}
\]
We also set $ H(0) := -\frac{1}{12} $ and $ H(n) := 0 $ if $ n \in \Z_{<0} $ or $ n \equiv 1, 2 \pmod{4} $.
\end{definition}

We briefly summarize some basic facts about modular forms; for details, see \cite{Ko,On}. Let $ F: \mathbb{H} \to \C $ denote a function on the upper half-plane. The \emph{weight $ \kappa $ slash operator} is defined as follows: if $ \kappa \in \Z $, then
\[
	\left(F\vert_\kappa\gamma\right)(\tau) := (c\tau+d)^{-\kappa}F(\gamma\tau),
\]
where $ \gamma = \begin{pmatrix}a&b\\c&d\end{pmatrix} \in \SL_2(\Z) $, and if $ \kappa \in \frac{1}{2}+\Z $, then
\[
	\left(F\vert_\kappa \gamma\right)(\tau) := \legendre{c}{d}^{2\kappa}\varepsilon_d^{2\kappa}(c\tau+d)^{-\kappa}F(\gamma\tau),
\]
where $ \gamma = \begin{pmatrix}a&b\\c&d\end{pmatrix} \in \Gamma_0(4) $, $ \legendre{\cdot}{\cdot} $ is the \emph{extended Legendre symbol}, and
\[
	\varepsilon_d = \begin{cases}
		1&\text{if }d \equiv 1 \pmod{4},\\
		i&\text{if }d \equiv 3 \pmod{4}.
	\end{cases}
\]

Let $ \Gamma $ be a congruence subgroup containing $ T := \begin{pmatrix}1&1\\0&1\end{pmatrix} $ and such that $ \Gamma \subset \Gamma_0(4) $ if $ \kappa \in \frac{1}{2}+\Z $. We say that $ F $ satisfies \emph{modularity of weight $ \kappa $} on $ \Gamma $ with Nebentypus character $ \chi $ if
\[
	F\vert_\kappa = \chi(d)F.
\]

Moreover, if $ F $ is holomorphic on $ \mathbb{H} $ and $ F(\tau) $ grows at most polynomially in $ v $ as $ \tau := u+iv \to \Q \cup \{i\infty\} $, then $ F $ is called a \emph{(holomorphic) modular form}. If $ F $ vanishes at the cusps of $ \Gamma $, then it is called a \emph{cusp form}. The modular (resp. cusp) forms of weight $ \kappa $ on $ \Gamma $ with character $ \chi $ form a $ \C $-vector space which will be denoted by $ M_\kappa(\Gamma, \chi) $ (resp. $ S_\kappa(\Gamma, \chi) $).

If $ F $ satisfies weight $ \kappa $ modularity and there exist holomorphic functions $ F_j $ for $ 0 \leq j \leq \ell $ such that $ F(\tau) = \sum_{j=0}^\ell F_j(\tau)v^{-j} $, then $ F $ is called an \emph{almost holomorphic modular form} and the function $ F_0 $ is called a \emph{quasimodular form}.

The \emph{$ U $-operator} is defined for $ M \in \Z_{\geq 1} $ and $ F(\tau) = \sum_{n=0}^\infty a(n)q^n $ by
\[
	\left(F\vert U_M\right)(\tau) := \sum_{n=0}^\infty a(Mn)q^n,
\]
and the \emph{$ V $-operator} is defined for $ M \in \Z_{\geq 1} $ and $ F: \mathbb{H}\to \C $ by
\[
	\left(F\vert V_M\right)(\tau) := F(M\tau).
\]
Suppose that $ F $ satisfies weight $ \kappa $ modularity on $ \Gamma_0(N_1) \cap \Gamma_1(N_2) $ with character $ \chi $, where $ N_2 \mid N_1 $ and where we assume $ 4 \mid N_1 $ if $ \kappa \in \frac{1}{2}+\Z $. Then $ F\vert U_M $ satisfies weight $ \kappa $ modularity on $ \Gamma_0\left(\lcm(N_1,M)\right)\cap \Gamma_1(N_2) $ with character $ \chi \cdot \legendre{M}{\cdot}^{2\kappa} $, and $ F\vert V_M $ satisfies weight $ \kappa $ modularity on $ \Gamma_0(N_1M)\cap \Gamma_1(N_2) $ with character $ \chi \cdot \legendre{M}{\cdot}^{2\kappa} $.

For a Dirichlet character $ \psi $ with modulus $ M $, the \emph{$ \psi $-twist} of $ F = \sum_{n=0}^\infty a(n)q^n $ is defined by
\[
	\left(F \otimes \psi\right)(\tau) := \sum_{n=0}^\infty \psi(n)a(n)q^n.
\]
If $ F \in M_\kappa(\Gamma_0(N),\chi) $, then $ F \otimes \psi \in M_\kappa(\Gamma_0(NM^2),\chi\psi^2) $.

For $ M \in \Z_{\geq 1} $ and $ m \in \Z $, the \emph{sieving operator $ S_{M,m} $} is defined by
\[
	\left(F\vert S_{M,m}\right)(\tau) = \sum_{n \in \Z}a(Mn+m)q^{Mn+m}.
\]
We will show how this operator acts on spaces of modular forms in \Cref{lemmaS}.

To check whether two modular forms in the same space are equal, we use the Sturm bound, which is a consequence of the valence formula.

\begin{theorem}[Sturm bound, {\cite[Corollary 9.20]{St}}]
\label{thmSturm}
Let $ \kappa \in \Z_{\geq 2} $, $ \Gamma \subset \SL_2(\Z) $ be a congruence subgroup, and $ \chi $ be a character. If $ f = \sum_{n=0}^\infty a(n)q^n $ and $ g = \sum_{n=0}^\infty b(n)q^n $ are two modular forms in $ M_\kappa(\Gamma,\chi) $ such that
\[
	a(n) = b(n)\text{ for }0 \leq n \leq \left\lfloor\frac{\kappa m}{12}\right\rfloor,
\]
where $ m = [\SL_2(\Z):\Gamma] $, then $ f = g $.
\end{theorem}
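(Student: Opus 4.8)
The plan is the standard argument via the valence formula. First I would reduce to a single form: set $h := f - g \in M_\kappa(\Gamma,\chi)$, so that $h = \sum_{n\geq 0} c(n)q^n$ with $c(n) = a(n)-b(n)$; the hypothesis says $c(n) = 0$ for all $0 \leq n \leq \lfloor \kappa m/12\rfloor$, while the conclusion $f = g$ amounts to $h = 0$. Suppose $h \neq 0$. Because the congruence subgroups in play contain $T$ — this is implicit in the very fact that $f$ and $g$ have $q$-expansions in integer powers of $q$ — the cusp $\infty$ has width $1$, its local uniformizer is $q$, and $\ord_\infty(h)$ is a well-defined non-negative integer; the vanishing hypothesis then forces $\ord_\infty(h) \geq \lfloor \kappa m/12\rfloor + 1$.

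Next I would invoke the valence formula on the compactified modular curve $X(\Gamma)$: for a nonzero $h \in M_\kappa(\Gamma,\chi)$, the sum over all points of $X(\Gamma)$ of the orders of vanishing of $h$ — weighted by $1/e$ at an elliptic point of period $e$, and taken with respect to the local parameter at each cusp — equals $\frac{\kappa}{12}$ times the index of the image of $\Gamma$ in $\mathrm{PSL}_2(\Z)$, which is at most $\frac{\kappa m}{12}$. Since $h\vert_\kappa\gamma = \chi(\gamma)h$ differs from $h$ by a nonzero constant for $\gamma \in \Gamma$, the divisor of $h$ is $\Gamma$-invariant and descends to $X(\Gamma)$, and the valence formula is the computation of its degree; one can obtain it, for instance, by applying the classical $\SL_2(\Z)$ valence formula to a suitable power of the norm $\prod_\gamma h\vert_\kappa\gamma$ taken over coset representatives of $\Gamma\backslash\SL_2(\Z)$. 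Because $h$ is holomorphic on $\mathbb{H}$ and at every cusp, every term in this sum is non-negative, so in particular $\ord_\infty(h) \leq \frac{\kappa m}{12}$; as $\ord_\infty(h)$ is an integer, $\ord_\infty(h) \leq \lfloor \kappa m/12\rfloor$. This contradicts the previous paragraph, so $h = 0$ and hence $f = g$.

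The only nontrivial ingredient, and hence the main obstacle, is the valence formula itself with the exact constant and the correct normalizations at elliptic fixed points and cusps — in particular pinning down that the relevant index is that of $\Gamma$ (so that $\frac{\kappa m}{12}$ is a valid, if possibly non-optimal, upper bound) and keeping track of cusp widths under the norm construction. The rest is routine: holomorphy at the cusps gives non-negativity of all local orders, and $T \in \Gamma$ makes $\ord_\infty(h)$ an honest non-negative integer, which is exactly why the floor appears in the statement. The restriction $\kappa \geq 2$ is not essential to this argument, but it covers every case needed in the paper.
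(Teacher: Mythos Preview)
Your argument is correct and is precisely the standard route: reduce to $h=f-g$, use holomorphy at all cusps to make every local order non-negative, and bound $\ord_\infty(h)$ via the valence formula. The paper, however, does not supply its own proof of this statement at all; it merely cites \cite[Corollary 9.20]{St} and remarks that the Sturm bound ``is a consequence of the valence formula,'' which is exactly the approach you sketch.
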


In the situations where we apply \Cref{thmSturm}, we use the following simple lemma to compute the index of the group $ \Gamma $.

\begin{lemma}[{\cite[Lemma 2.2]{BK2}}]
\label{lemmaInd}
If $ N_1, N_2 \in \Z_{\geq 1} $ with $ N_2 \mid N_1 $, then
\[
	\left[\SL_2(\Z):\Gamma_0(N_1)\cap\Gamma_1(N_2)\right] = N_1\prod_{p\mid N_1}\left(1+\frac{1}{p}\right)\varphi(N_2),
\]
where the product runs over all primes dividing $ N_1 $. In particular,
\[
	\left[\SL_2(\Z):\Gamma_0(N_1)\right] = N_1\prod_{p \mid N_1}\left(1+\frac{1}{p}\right).
\]
\end{lemma}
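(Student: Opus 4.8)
The plan is to realise $\Gamma_0(N_1)\cap\Gamma_1(N_2)$ as the bottom of a short tower of subgroups and exploit multiplicativity of the index. Since $N_2\mid N_1$, we have the chain $\Gamma_0(N_1)\cap\Gamma_1(N_2)\subset\Gamma_0(N_1)\subset\SL_2(\Z)$, so
\[
	\left[\SL_2(\Z):\Gamma_0(N_1)\cap\Gamma_1(N_2)\right] = \left[\SL_2(\Z):\Gamma_0(N_1)\right]\cdot\left[\Gamma_0(N_1):\Gamma_0(N_1)\cap\Gamma_1(N_2)\right],
\]
and it suffices to evaluate each factor. The first factor is the classical index formula $[\SL_2(\Z):\Gamma_0(N_1)] = N_1\prod_{p\mid N_1}(1+\tfrac{1}{p})$, which I would either quote from a standard reference or rederive in one line: reduction modulo $N_1$ gives a surjection $\SL_2(\Z)\to\SL_2(\Z/N_1\Z)$ under which $\Gamma_0(N_1)$ is the preimage of the Borel subgroup of upper-triangular matrices, and then $|\SL_2(\Z/N_1\Z)| = N_1^3\prod_{p\mid N_1}(1-\tfrac{1}{p^2})$ while the Borel has order $N_1\varphi(N_1)$; dividing and simplifying $\varphi(N_1) = N_1\prod_{p\mid N_1}(1-\tfrac1p)$ gives $N_1\prod_{p\mid N_1}(1+\tfrac1p)$.

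For the second factor I would use the reduction-mod-$N_2$ map. Because $N_2\mid N_1$, every $\gamma=\begin{pmatrix}a&b\\c&d\end{pmatrix}\in\Gamma_0(N_1)$ has $c\equiv 0\pmod{N_2}$, so reducing mod $N_2$ yields an upper-triangular matrix with $ad\equiv 1$; since the $(2,2)$-entry of a product $\gamma_1\gamma_2$ is $c_1b_2+d_1d_2\equiv d_1d_2\pmod{N_2}$, the assignment $\gamma\mapsto d\bmod N_2$ is a group homomorphism $\Gamma_0(N_1)\to(\Z/N_2\Z)^\times$. Its kernel consists of the $\gamma\in\Gamma_0(N_1)$ with $d\equiv 1\pmod{N_2}$, and then $ad\equiv1\pmod{N_2}$ forces $a\equiv1\pmod{N_2}$ too, so the kernel is exactly $\Gamma_0(N_1)\cap\Gamma_1(N_2)$. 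For surjectivity, given a unit class $d_0\bmod N_2$ I would pick (Chinese Remainder Theorem) an integer $d\equiv d_0\pmod{N_2}$ that is coprime to $N_1$, choose $a$ with $ad\equiv1\pmod{N_1}$, and set $c=N_1$, $b=(ad-1)/N_1\in\Z$; the resulting matrix lies in $\Gamma_0(N_1)$ and maps to $d_0$. Hence $[\Gamma_0(N_1):\Gamma_0(N_1)\cap\Gamma_1(N_2)]=|(\Z/N_2\Z)^\times|=\varphi(N_2)$, and multiplying the two factors gives the claim; the ``in particular'' statement is the case $N_2=1$, where $\Gamma_1(1)=\SL_2(\Z)$ and $\varphi(1)=1$.

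The argument is essentially all routine, and the only step requiring a moment's care is the surjectivity of the reduction homomorphism: one must ensure the chosen representative $d$ is simultaneously congruent to $d_0$ modulo $N_2$ \emph{and} coprime to the whole of $N_1$, so that both the inverse $a$ modulo $N_1$ and the integer $b$ exist. This is exactly where $\gcd(d_0,N_2)=1$ enters, via the Chinese Remainder Theorem. Beyond that, the homomorphism property, the computation of the kernel, multiplicativity of the index, and the classical $\Gamma_0(N)$-index are all standard, so I do not expect any genuine obstacle.
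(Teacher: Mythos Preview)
Your proof is correct and entirely standard. Note, however, that the paper itself does not supply a proof of this lemma: it is stated with a citation to \cite[Lemma~2.2]{BK2} and used as a black box, so there is no ``paper's own proof'' to compare against. Your tower-of-subgroups argument --- computing $[\SL_2(\Z):\Gamma_0(N_1)]$ via reduction modulo $N_1$ and the Borel-subgroup count, then identifying $[\Gamma_0(N_1):\Gamma_0(N_1)\cap\Gamma_1(N_2)]$ with $\varphi(N_2)$ via the homomorphism $\gamma\mapsto d\bmod N_2$ --- is exactly the expected route, and your care with the surjectivity step (lifting $d_0$ to an integer coprime to all of $N_1$, not just $N_2$) is the only point where one could slip. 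Nothing is missing.
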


Next, we recall the definition of CM modular form.

\begin{definition}[{\cite[p. 34]{Ri}}]
Let $ \kappa \in \Z_{\geq 2} $, $ N \in \Z_{\geq 1} $, and $ \chi $ be a Dirichlet character. Let $ f(\tau) = \sum_{n=0}^\infty c(n)q^n $ in $ S_\kappa(\Gamma_0(N),\chi) $, and assume that $ f $ is a newform. Let $ \varphi $ be a Dirichlet character modulo $ M $ for some $ M \in \Z_{\geq 2} $. The form $ f $ has \emph{complex multiplication by $ \varphi $} if
\[
	\varphi(p)a(p) = a(p)
\]
for all primes $ p $ in a set of primes of density $ 1 $.

If $ D \in \Z_{<0} $, $ D \equiv 0, 1 \pmod{4} $, $ K = \Q(\sqrt{D}) $ is a quadratic field, and $ \varphi = \legendre{D}{\cdot} $ is the quadratic character of $ K $, then we say that $ f $ \emph{has complex multiplication by $ K $}.
\end{definition}

The following theorem due to Hirzebruch and Zagier states that one can complete the generating function $ \H $ for the Hurwitz class numbers to obtain a \emph{harmonic weak Maass form}. Since we will not use this theorem directly, we do not define harmonic weak Maass forms and instead refer to, e.g., \cite[Definition 3.1]{Me}.

\begin{theorem}[{\cite[Theorem 2]{HZ}}]
\label{thmHhat}
The function
\[
	\widehat{\H}(\tau) := \H(\tau)+\frac{1}{8\pi\sqrt{v}}+\frac{1}{4\sqrt{\pi}}\sum_{n=1}^\infty n\Gamma\left(-\frac{1}{2},4\pi n^2 v\right)q^{-n^2},
\]
where $ \Gamma(\cdot, \cdot) $ denotes the \emph{incomplete Gamma function}, is a harmonic Maass form of weight $ \frac{3}{2} $ on $ \Gamma_0(4) $.
\end{theorem}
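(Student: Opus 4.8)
Since this result is only used as a black box, in the paper it is enough to cite \cite[Theorem 2]{HZ}; we nonetheless sketch one way to obtain it, following the classical route via Eisenstein series (Hirzebruch and Zagier's own proof in \cite{HZ} instead goes through intersection theory on Hilbert modular surfaces).

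The plan is to realise $\widehat{\H}$ as a special value of a real-analytic Eisenstein series of weight $\tfrac{3}{2}$ on $\Gamma_0(4)$. For $\tau = u+iv \in \mathbb{H}$ and $s \in \C$ with $\Re(s) > \tfrac{1}{4}$, put
\[
	E(\tau,s) := v^{s}\sum_{\gamma \in \Gamma_\infty\backslash\Gamma_0(4)}\legendre{c}{d}^{3}\varepsilon_d^{3}(c\tau+d)^{-3/2}\,|c\tau+d|^{-2s},
\]
where $\gamma = \begin{pmatrix}a&b\\c&d\end{pmatrix}$ runs through $\Gamma_\infty\backslash\Gamma_0(4)$ and $\Gamma_\infty = \langle T\rangle$. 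By construction $E(\cdot,s)$ satisfies weight $\tfrac{3}{2}$ modularity on $\Gamma_0(4)$ for the half-integral weight slash operator of \Cref{secPrelim}; it extends meromorphically in $s$ to all of $\C$; and, since $\Delta_{3/2}$ commutes with the slash operator and $\Delta_{3/2}v^{s} = -s\bigl(s+\tfrac{1}{2}\bigr)v^{s}$, it is a $\Delta_{3/2}$-eigenfunction with eigenvalue $-s\bigl(s+\tfrac{1}{2}\bigr)$. At the point $s=0$ this eigenvalue vanishes, the continuation is holomorphic, and the resulting function has at most polynomial growth towards every cusp; hence $E(\cdot,0)$ is a harmonic Maass form of weight $\tfrac{3}{2}$ on $\Gamma_0(4)$, and it only remains to match it with $\widehat{\H}$.

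For this I would compute the Fourier expansion of $E(\tau,s)$ in $u$ by the usual unfolding: split the $\gamma$-sum according to the lower-left entry $c$, apply Poisson summation in the remaining residue variable, and evaluate the arising Gauss sums and archimedean (Whittaker) integrals. Specialising to $s=0$, the constant term produces $H(0)=-\tfrac{1}{12}$ together with the non-holomorphic summand $\tfrac{1}{8\pi\sqrt{v}}$; for $n>0$ the $n$-th coefficient equals a universal constant times an arithmetic factor which has to be identified with the Hurwitz class number $H(n)$; and at the exponent $n=-f^{2}$ with $f\geq 1$ one obtains a constant multiple of $f\,\Gamma\!\bigl(-\tfrac{1}{2},4\pi f^{2}v\bigr)$. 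Comparing normalisations gives $E(\tau,0)=c\cdot\widehat{\H}(\tau)$ for an explicit $c\in\C^{\times}$, which is the theorem; equivalently, this exhibits the weight $\tfrac{1}{2}$ theta series $\theta(\tau)=\sum_{n\in\Z}q^{n^{2}}$ as, up to a constant, the shadow of $\H$ under $\xi_{3/2}$ (see \cite{BF} for these notions).

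The only genuinely hard step is the identification in the middle range: recognising the holomorphic Fourier coefficients of $E(\tau,0)$ as $H(n)$. The $n$-th coefficient comes out as a ratio of quadratic Dirichlet $L$-values times a finite Euler product over the primes $p$ with $p^{2}\mid n$, i.e.\ as a product of local representation densities, and one must see that this product is exactly $H(n)$. The cleanest route is the Siegel--Weil formula: $E(\tau,0)$ is a mass-weighted average of ternary theta series, whose $n$-th coefficient counts, with automorphism weights, the representations of $n$ by the forms of a suitable genus, and Gauss's theory of ternary quadratic forms turns that count into $H(n)$. Alternatively, one evaluates the Dirichlet series $\sum_{n\geq 1}H(n)n^{-s}$ in closed form via the class number formula and Hurwitz's recursion and compares it with the Mellin transform of the Eisenstein series. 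Either way the transformation law and the harmonicity are essentially formal once $E(\tau,s)$ is in hand; the real work lies in these local computations.
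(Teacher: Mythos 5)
The paper offers no proof of this statement at all --- it is quoted as a black box from \cite{HZ} --- so there is no internal argument to compare yours against; you correctly recognize that a citation suffices here. Your sketch of the Eisenstein-series route is the standard one and is sound in outline: the convergence range $\Re(s)>\tfrac14$ is right, the eigenvalue computation $\Delta_{3/2}v^{s}=-s\bigl(s+\tfrac12\bigr)v^{s}$ is correct, and the shape of the Fourier expansion at $s=0$ (Hurwitz class numbers at positive exponents, $-\tfrac1{12}$ plus the $\tfrac{1}{8\pi\sqrt{v}}$ term in the constant term, incomplete Gamma factors at exponents $-n^{2}$) matches Zagier's computation, as does the statement that $\theta$ is the $\xi_{3/2}$-shadow of $\H$. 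One historical correction: the proof of this particular theorem in \cite{HZ} is itself essentially the analytic continuation argument you describe (going back to Zagier's note on Eisenstein series of weight $\tfrac32$); the intersection theory on Hilbert modular surfaces is the subject of the rest of that paper, not the mechanism behind this modularity statement. Finally, be aware that your sketch explicitly defers the one genuinely hard step --- identifying the holomorphic Fourier coefficients of $E(\tau,0)$ with $H(n)$, whether via Siegel--Weil and Gauss's theory of ternary forms or via the Dirichlet series $\sum_{n\ge1}H(n)n^{-s}$ --- and either route requires a careful local computation (notably at $p=2$ and at primes $p$ with $p^{2}\mid n$). As a justification for importing the theorem this is more than adequate; as a self-contained replacement for the citation it would still need that computation written out.
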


If we let
\[
	\theta_{m,M}(\tau) := \sum_{\substack{n \in \Z\\n\equiv m \pmod{M}}}q^{n^2},
\]
then the generating function
\[
	\H_{m,M}(\tau) := \sum_{n=0}^\infty H_{m,M}(n)q^n
\]
satisfies
\[
	\H_{m,M} = \left(\H\theta_{m,M}\right)\vert U_4.
\]

\begin{definition}[{\cite[p. 5]{KP22}}]
\label{defRC}
For $ F_1 $, $ F_2 $ transforming like modular forms of weight $ \kappa_1, \kappa_2 \in \frac{1}{2}\Z $, respectively, define for $ k \in \Z_{\geq 0} $ the $ k $-th \emph{Rankin-Cohen bracket}
\[
	[F_1, F_2]_k := \frac{1}{(2\pi i)^k}\sum_{j=0}^k (-1)^j\binom{\kappa_1+k-1}{k-j}\binom{\kappa_2+k-1}{j}F_1^{(j)}F_2^{(k-j)}
\]
with $ \binom{\alpha}{j} := \frac{\Gamma(\alpha+1)}{j! \Gamma(\alpha-j+1)} $. Then $ [F_1, F_2]_k $ transforms like a modular form of weight $ \kappa_1+\kappa_2+2k $.
\end{definition}
Note that  for $ k = 0 $, the Rankin-Cohen bracket $ [F_1,F_2]_0 $ is simply the product $ F_1F_2 $. Let
\[
	\Lambda_{\ell,m,M}(\tau) := \sum_{n=1}^\infty \lambda_{\ell,m,M}(n)q^n,\qquad\text{where}\qquad\lambda_{\ell,m,M}(n) := \sum_{\pm}\sideset{}{^*}\sum_{\substack{t>s\geq 0\\t^2-s^2=n\\t \equiv \pm m\pmod{M}}}(t-s)^\ell.
\]
The $^*$ next to the sum means that terms for $ s = 0 $ are taken with weight $ \frac{1}{2} $.

\begin{lemma}[{\cite[Lemma 3.3]{KP22}}]
\label{lemmaHLambda}
For $ k \in \Z_{\geq 0} $, $ m \in \Z $, and $ M \in \Z_{\geq 1} $, the function
\[
	\left([\H,\theta_{m,M}]_k+2^{-1-2k}\binom{2k}{k}\Lambda_{2k+1,m,M}\right)\bigg\vert U_4
\]
is a holomorphic cusp form of weight $ 2+2k $ on $ \Gamma_0(4M^2)\cap \Gamma_1(M) $ (resp. $ \Gamma_0(4M^2) $) if $ M \nmid m $ (resp. $ M \mid m $) if $ k > 0 $ and quasimodular on that group if $ k = 0 $.
\end{lemma}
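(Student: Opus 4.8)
The plan is to obtain the function in the lemma as the holomorphic projection of a Rankin--Cohen bracket built from the completed form $\widehat{\H}$ of \Cref{thmHhat}, the $\Lambda$-term arising as the correction contributed by the non-holomorphic part of $\widehat{\H}$; this follows the holomorphic-projection method of Mertens \cite{Me2}. First I would assemble the modularity data. By the transformation law of the classical theta function together with a sieving/twisting argument, $\theta_{m,M}$ transforms like a weight $\frac12$ modular form on $\Gamma_0(4M^2)\cap\Gamma_1(M)$ with a Nebentypus modulo $M$, and on $\Gamma_0(4M^2)$ with trivial character when $M\mid m$ (since then $\theta_{m,M}=\theta\vert V_{M^2}$). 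Combined with \Cref{thmHhat} and \Cref{defRC}, this shows that $[\widehat{\H},\theta_{m,M}]_k$ transforms like a weight $2+2k$ modular form on $\Gamma_0(4M^2)\cap\Gamma_1(M)$: the two half-integral multiplier systems, of weights $\frac32$ and $\frac12$, multiply to $\legendre{c}{d}^4\varepsilon_d^4=1$, leaving only the Nebentypus of $\theta_{m,M}$. Since $\lcm(4M^2,4)=4M^2$ and $\legendre{4}{\cdot}$ is trivial, the stated action of $U_4$ on modularity gives that $[\widehat{\H},\theta_{m,M}]_k\vert U_4$ transforms like a weight $2+2k$ modular form on $\Gamma_0(4M^2)\cap\Gamma_1(M)$ (resp. $\Gamma_0(4M^2)$) if $M\nmid m$ (resp. $M\mid m$). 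It is real-analytic and, as $\widehat{\H}$ has at most polynomial growth in $v$ and $\theta_{m,M}$ is bounded, of moderate growth at the cusps.

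Next I would apply the holomorphic projection operator $\pi_{\mathrm{hol}}$, available in weight $2+2k\ge2$ (for $k=0$ in its regularized form), which produces a holomorphic form of weight $2+2k$ on the group above (a quasimodular form when $k=0$). It remains to compute $\pi_{\mathrm{hol}}\big([\widehat{\H},\theta_{m,M}]_k\vert U_4\big)$. Write $\widehat{\H}=\H+\H^-$ with $\H^-(\tau)=\frac{1}{8\pi\sqrt{v}}+\frac{1}{4\sqrt{\pi}}\sum_{n\ge1}n\,\Gamma\!\left(-\tfrac12,4\pi n^2v\right)q^{-n^2}$ and use linearity of the bracket in its first argument. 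The holomorphic summand $[\H,\theta_{m,M}]_k\vert U_4$ is returned unchanged by $\pi_{\mathrm{hol}}$ at the level of Fourier coefficients, since the defining integral $\frac{(4\pi n)^{\kappa-1}}{\Gamma(\kappa-1)}\int_0^\infty a(n)e^{-4\pi nv}v^{\kappa-2}\,dv$ evaluates to $a(n)$ for a $v$-independent coefficient $a(n)$. For the non-holomorphic summand one substitutes the Fourier expansions of $\H^-$ and $\theta_{m,M}$, carries out the $\tau$-derivatives in the Rankin--Cohen bracket (so that each incomplete Gamma function is differentiated to an elementary function of $v$), applies the Fourier-coefficient formula for $\pi_{\mathrm{hol}}$, and evaluates the resulting beta-type integrals. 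Matching the exponent $-s^2$ from $\H^-$ against the exponent $t'^2$ from $\theta_{m,M}$, with $t'\equiv m\pmod M$ so that $|t'|\equiv\pm m$, and with the factor of $4$ imposed by $U_4$, reproduces exactly the arithmetic function $\lambda_{2k+1,m,M}$; the $s=0$ contribution inherits weight $\frac12$ from the separate $\frac{1}{8\pi\sqrt{v}}$ term of $\H^-$, matching the $^*$ in the definition of $\Lambda_{2k+1,m,M}$; and bookkeeping of the binomial coefficients from \Cref{defRC} against the Gamma-integrals produces the constant $2^{-1-2k}\binom{2k}{k}$. This gives
\[
	\pi_{\mathrm{hol}}\big([\widehat{\H},\theta_{m,M}]_k\vert U_4\big)=\left([\H,\theta_{m,M}]_k+2^{-1-2k}\binom{2k}{k}\Lambda_{2k+1,m,M}\right)\bigg\vert U_4 ,
\]
which is therefore a holomorphic form of weight $2+2k$ on the stated group, and the lemma follows once cuspidality (for $k\ge1$) is addressed.

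The main obstacle is precisely this explicit evaluation of $\pi_{\mathrm{hol}}$ on the non-holomorphic piece: keeping track of the derivatives of the incomplete Gamma functions inside the bracket, computing the integrals that arise, and recognizing the resulting divisor-type sum --- with the exact constant $2^{-1-2k}\binom{2k}{k}$ and the correct weighting of the $s=0$ terms --- as $\Lambda_{2k+1,m,M}\vert U_4$. Two further points need care. For $k\ge1$ the bracket involves at least one derivative, so its constant term at $\infty$ vanishes, and checking the transformations at the remaining cusps (or inspecting the $q$-expansion of the explicit formula) shows that $\pi_{\mathrm{hol}}$ of it is a cusp form. For $k=0$ the defining integral for holomorphic projection converges only conditionally and the output is merely quasimodular, exactly as for the weight-$2$ Eisenstein series; here the bracket is the product $\widehat{\H}\theta_{m,M}\vert U_4$, whose non-vanishing constant term is the source of the quasimodular correction, matching the statement.
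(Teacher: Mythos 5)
The paper itself does not prove \Cref{lemmaHLambda}: it is quoted from \cite[Lemma 3.3]{KP22}, with the remark that the statement originates (for $M$ prime) in \cite{Me2} and that Kane and Pujahari corrected the constant in front of $\Lambda_{2k+1,m,M}$. Your outline is the holomorphic-projection argument that those sources actually use, so you have identified the right strategy: complete $\H$ to $\widehat{\H}$ via \Cref{thmHhat}; check that $[\widehat{\H},\theta_{m,M}]_k\vert U_4$ transforms with weight $2+2k$ on the stated group (your multiplier computation $\legendre{c}{d}^4\varepsilon_d^4=1$ and the observation that $\theta_{0,M}=\theta_0\vert V_{M^2}$ when $M\mid m$ are both correct); apply $\pi_{\mathrm{hol}}$, regularized when $k=0$, which is exactly why one only gets quasimodularity there; and identify the contribution of the non-holomorphic part $\H^-$ of $\widehat{\H}$ with the $\Lambda$-term.

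The substantive caveat is that the decisive step --- the explicit evaluation of $\pi_{\mathrm{hol}}$ on the term coming from $\H^-$, which must produce both the arithmetic function $\lambda_{2k+1,m,M}$ (including the weight-$\frac{1}{2}$ convention for the $s=0$ terms, traceable to the separate $\frac{1}{8\pi\sqrt{v}}$ summand) and the precise constant $2^{-1-2k}\binom{2k}{k}$ --- is described but not carried out. This is not a verification one can wave at: it is exactly the computation in which the constant in \cite{Me2} was wrong and had to be corrected in \cite{KP22}, so a self-contained proof must actually perform the incomplete-Gamma derivatives, the beta-type integrals, and the binomial bookkeeping. As written, your argument establishes the qualitative statement (modularity, respectively quasimodularity for $k=0$, of $[\H,\theta_{m,M}]_k\vert U_4$ up to \emph{some} correction supported on coefficients $n$ for which $4n-t^2$ is a square) but takes the exact shape and normalization of that correction on faith; the cuspidality discussion for $k\geq 1$ is likewise only sketched, though it is standard for holomorphic projection in weight greater than $2$.
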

This result was originally stated by Mertens \cite[p.~379]{Me2} in the special case when $ M = p $ is prime. Kane and Pujahari \cite{KP22} pointed out that it remains valid for $ M $ composite and corrected the constant appearing in front of $ \Lambda_{2k+1,m,M} $.

\section{Auxiliary lemmas}
\label{secAux}

\Cref{lemmaHLambda} for $ k = 0 $ is the main ingredient in our proof of explicit formulas like the one in \Cref{thmH06}. In this section, we find a different expression for $ \Lambda_{\ell,m,M} $ which is more suitable for calculations. Similar formulas were given by Mertens in the case when $ M = p $ is a prime, see \cite[Proposition 7.2]{Me2} and also \cite[Proposition V.4.3]{Me}. It would be sufficient to evaluate $ \Lambda_{\ell,m,M} $ only for $ \ell = 1 $. However, finding the evaluation for general $ \ell $ presents very little additional difficulty.

For $ \ell \in \Z_{\geq 0} $, $ a, b \in \Z $, and $ M, n \in \Z_{\geq 1} $, let
\[
	\mu_{\ell, a, b, M}(n) := \sum_{\substack{t>s\geq 1\\t^2-s^2=4n\\t \equiv a \pmod{M}\\s \equiv b \pmod{M}}}(t-s)^\ell.
\] 

\begin{lemma}
\label{lemmaLambda1}
Let $ \ell \in \Z_{\geq 0} $, $ a, b \in \Z $, and $ M = 2^eM_1 $, where $ e \in \Z_{\geq 1} $ and $ M_1 \in \Z_{\geq 1} $ is odd. Let $ f = 0 $ if $ e = 1 $ and $ f = e $ if $ e \geq 2 $. Let $ n \in \Z_{\geq 1} $ be such that $ \gcd(n,M) = 1 $.
\begin{enumerate}
\item If $ a $ or $ b $ is odd, then $ \mu_{\ell, a, b, M}(n) = 0 $.
\item If $ a = 2a_1 $ and $ b = 2b_1 $, then
\[
	\mu_{\ell,a,b,M}(n) = 2^\ell\sum_{\substack{d\mid n\\d<\sqrt{n}\\d \equiv a_1-b_1\pmod{2^{e-1}M_1}}}d^\ell
\]
if $ n \equiv a_1^2-b_1^2 \pmod{2^fM_1} $ and $ \mu_{\ell,a,b,M}(n) = 0 $ otherwise.
\end{enumerate}
\end{lemma}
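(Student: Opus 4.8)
The plan is to reduce everything to divisors of $n$ and then track the congruences $2$-adically. Since $2 \mid M$ and $\gcd(n,M) = 1$, the integer $n$ is odd, so $t^2 - s^2 = 4n \equiv 4 \pmod 8$. First I would note that $t \equiv s \pmod 2$ (because $t^2 \equiv s^2 \pmod 4$) and that $t, s$ cannot both be odd: writing $t - s = 2\alpha$ and $t + s = 2\beta$, one has $\alpha + \beta = t$ odd, so $\alpha\beta$ is even and $(t-s)(t+s) = 4\alpha\beta$ is divisible by $8$, contradicting $4n \equiv 4 \pmod 8$. Hence every solution has $t = 2t_1$, $s = 2s_1$ with $t_1 > s_1 \geq 1$ and $t_1^2 - s_1^2 = n$. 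In particular $t \equiv s \equiv 0 \pmod 2$, which forces $a \equiv b \equiv 0 \pmod 2$ whenever a solution exists, giving part (1).

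For part (2), with $a = 2a_1$ and $b = 2b_1$, the congruence $2t_1 = t \equiv 2a_1 \pmod{M}$ is equivalent to $t_1 \equiv a_1 \pmod{2^{e-1}M_1}$ (and likewise for $s_1, b_1$), since $M_1$ is odd. So $\mu_{\ell,a,b,M}(n) = 2^\ell \sum d^\ell$, where the sum runs over pairs $(t_1, s_1)$ with $t_1 > s_1 \geq 1$, $t_1^2 - s_1^2 = n$, $t_1 \equiv a_1$ and $s_1 \equiv b_1 \pmod{2^{e-1}M_1}$, and $d := t_1 - s_1$. Next I would use the classical bijection between such pairs (ignoring the congruences) and divisors $d \mid n$ with $d < \sqrt n$, via $d' := n/d$, $t_1 = (d+d')/2$, $s_1 = (d'-d)/2$; both $d$ and $d'$ are odd since $n$ is, so these are integers, and $s_1 \geq 1 \iff d < d' \iff d < \sqrt n$. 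Under this bijection $t_1 - s_1 = d$, so the summand is exactly $2^\ell d^\ell$, and it remains to decide for which divisors $d$ the two congruences hold.

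I would analyse the condition $t_1 \equiv a_1$, $s_1 \equiv b_1 \pmod{2^{e-1}M_1}$ via the Chinese Remainder Theorem, splitting the modulus into its odd part $M_1$ and its $2$-part $2^{e-1}$. Modulo $M_1$ division by $2$ is allowed, so the condition becomes $d \equiv a_1 - b_1$ and $d' \equiv a_1 + b_1 \pmod{M_1}$; since $d \mid n$ and $\gcd(n, M_1) = 1$, $d$ is a unit modulo $M_1$, and using $dd' = n$ this is equivalent to $d \equiv a_1 - b_1 \pmod{M_1}$ together with $n \equiv a_1^2 - b_1^2 \pmod{M_1}$. Modulo $2^{e-1}$ (vacuous when $e = 1$) I claim the condition is equivalent to $d \equiv a_1 - b_1 \pmod{2^{e-1}}$ together with $n \equiv a_1^2 - b_1^2 \pmod{2^e}$: the forward implication is immediate since a congruence modulo $2^{e-1}$ squares to one modulo $2^e$ when $e \geq 2$. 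For the converse I would lift $d$ modulo $2^e$, writing $d \equiv (a_1 - b_1) + 2^{e-1}\varepsilon \pmod{2^e}$; using oddness of $a_1 - b_1$ and of $d'$, the relation $dd' \equiv a_1^2 - b_1^2 \pmod{2^e}$ forces $d' \equiv (a_1 + b_1) + 2^{e-1}\varepsilon \pmod{2^e}$, whence $2t_1 = d + d' \equiv 2a_1 \pmod{2^e}$ and $2s_1 = d' - d \equiv 2b_1 \pmod{2^e}$, i.e. $t_1 \equiv a_1$ and $s_1 \equiv b_1 \pmod{2^{e-1}}$. Recombining the two CRT components shows that the congruences hold iff $d \equiv a_1 - b_1 \pmod{2^{e-1}M_1}$ and $n \equiv a_1^2 - b_1^2 \pmod{2^f M_1}$ with $f$ as defined, which is the claim.

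The main obstacle is precisely this $2$-adic step: dividing the relations $t_1 = (d+d')/2$, $s_1 = (d'-d)/2$ by $2$ loses one power of $2$, which is exactly why for $e \geq 2$ the hypothesis on $n$ must be imposed modulo $2^e M_1$ rather than merely modulo $2^{e-1}M_1$, and pinning down the exponent $f$ is the delicate point; everything else is routine divisor bookkeeping.
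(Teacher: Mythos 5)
Your proof is correct and follows essentially the same route as the paper: the parity argument for part (1), and for part (2) the bijection $d = \frac{t-s}{2} \leftrightarrow (t,s)$ between small divisors of $n$ and solutions, with the key step being that $d \equiv a_1-b_1 \pmod{2^{e-1}M_1}$ together with $n \equiv a_1^2-b_1^2 \pmod{2^fM_1}$ recovers the congruences on $t$ and $s$ modulo $M$. Your CRT packaging into odd and $2$-parts, with the lift $d \equiv (a_1-b_1)+2^{e-1}\varepsilon \pmod{2^e}$, is just a tidier presentation of the paper's two-case analysis ($e=1$ versus $e\geq 2$, and the two residues of $d$ modulo $M$).
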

\begin{proof}
To prove (1), note that since we assume $ \gcd(n,M) = 1 $, the equality $ \frac{(t-s)}{2}\frac{(t+s)}{2} = n $ implies that $ \frac{t-s}{2} $ and $ \frac{t+s}{2} $ are odd, hence $ t $ and $ s $ are even.

Next, we prove (2). First, suppose $ e = 1 $, hence $ M = 2M_1 $ for $ M_1 $ odd. If $ \mu_{\ell,a,b,M}(n) $ is non-zero, then there exist $ t>s\geq 1 $, $ t \equiv 2a_1 \pmod{M} $, $ s \equiv 2b_1 \pmod{M} $ such that $ t^2-s^2 = 4n $, hence
\[
	n = \frac{(t-s)}{2}\frac{(t+s)}{2} \equiv (a_1-b_1)(a_1+b_1) \pmod{M_1}.
\]

We show that if $ n \equiv a_1^2-b_1^2 \pmod{M_1} $, then
\begin{equation}
\label{eqSumts}
	\sum_{\substack{t>s\geq 1\\t^2-s^2 = 4n\\t\equiv a \pmod{M}\\s \equiv b \pmod{M}}}(t-s)^\ell = 2^\ell\sum_{\substack{t>s\geq 1\\\frac{(t-s)}{2}\frac{(t+s)}{2} = n\\t \equiv 2a_1 \pmod{M}\\s \equiv 2b_1 \pmod{M}}}\left(\frac{t-s}{2}\right)^\ell = 2^\ell\sum_{\substack{d \mid n\\d<\sqrt{n}\\d \equiv a_1-b_1\pmod{M_1}}}d^\ell.
\end{equation}
Let us prove the second equality. If $ t > s \geq 1 $ are such that $ t \equiv 2a_1 \pmod{M} $, $ s \equiv 2b_1 \pmod{M} $, and $ \frac{(t-s)}{2}\frac{(t+s)}{2} = n $, then we set $ d = \frac{t-s}{2} $. We have $ d \mid n $, $ d < \sqrt{n} $, and $ d \equiv a_1-b_1 \pmod{M_1} $. Conversely, let $ d \mid n $, $ d < \sqrt{n} $ be such that $ d \equiv a_1-b_1 \pmod{M_1} $. By assumption, $ n $ is coprime to $ M $, hence $ d $ is invertible modulo $ M_1 $ and $ \frac{n}{d} \equiv a_1+b_1 \pmod{M_1} $. We find $ t $ and $ s $ such that $ \frac{t-s}{2} = d $ and $ \frac{t+s}{2} = \frac{n}{d} $, which satisfy $ t = \frac{n}{d}+d \equiv 2a_1 \pmod{M_1} $ and $ s = \frac{n}{d}-d \equiv 2b_1 \pmod{M_1} $. Moreover, since $ n $ is odd, $ d $ and $ \frac{n}{d} $ are odd, while $ t $ and $ s $ are even. Thus, $ t \equiv 2a_1 \pmod{2M_1} $ and $ s \equiv 2b_1 \pmod{2M_1} $.

Secondly, suppose $ e \geq 2 $. If $ \mu_{\ell,a,b,M}(n) $ is non-zero, then there exist $ t > s \geq 1 $, $ t \equiv 2a_1 \pmod{M} $, $ s \equiv 2b_1 \pmod{M} $ such that $ t^2-s^2 = 4n $. If we let $ t = Mv+2a_1 $ and $ s = Mu+2b_1 $, then
\begin{align*}
	 n& = \frac{(t-s)}{2}\frac{(t+s)}{2} = \left(2^{e-1}M_1(v-u)+a_1-b_1\right)\left(2^{e-1}M_1(v+u)+a_1+b_1\right)\\
	 & \equiv 2^{e-1}M_1(v-u)(a_1+b_1)+2^{e-1}M_1(v+u)(a_1-b_1)+(a_1-b_1)(a_1+b_1) \equiv a_1^2-b_1^2 \pmod{M}.
\end{align*}

We show that if $ n \equiv a_1^2-b_1^2 \pmod{M} $, then
\[
	\sum_{\substack{t>s\geq 1\\t^2-s^2 = 4n\\t\equiv a \pmod{M}\\s \equiv b \pmod{M}}}(t-s)^\ell = 2^\ell\sum_{\substack{t>s\geq 1\\\frac{(t-s)}{2}\frac{(t+s)}{2} = n\\t \equiv 2a_1 \pmod{M}\\s \equiv 2b_1 \pmod{M}}}\left(\frac{t-s}{2}\right)^\ell = 2^\ell\sum_{\substack{d \mid n\\d<\sqrt{n}\\d \equiv a_1-b_1\pmod{2^{e-1}M_1}}}d^\ell.
\]
Again, let us prove the second equality. If $ t > s \geq 1 $ are such that $ t \equiv 2a_1 \pmod{M} $, $ s \equiv 2b_1 \pmod{M} $, and $ \frac{(t-s)}{2}\frac{(t+s)}{2} = n $, then we set $ d = \frac{t-s}{2} $. We have $ d \mid n $, $ d < \sqrt{n} $, and $ d \equiv a_1-b_1 \pmod{2^{e-1}M_1} $. Conversely, let $ d \mid n $, $ d < \sqrt{n} $ be such that $ d \equiv a_1-b_1 \pmod{2^{e-1}M_1} $. We find $ t $ and $ s $ such that $ \frac{t-s}{2} = d $ and $ \frac{t+s}{2} = \frac{n}{d} $. Either $ d \equiv a_1-b_1 \pmod{M} $ or $ d \equiv 2^{e-1}M_1+a_1-b_1 \pmod{M} $. In the first case, $ \frac{n}{d} \equiv a_1+b_1 \pmod{M} $, hence $ t = \frac{n}{d}+d \equiv 2a_1 \pmod{M} $ and $ s = \frac{n}{d}-d \equiv 2b_1 \pmod{M} $. In the second case,
\[
	\left(2^{e-1}M_1+a_1-b_1\right)\left(2^{e-1}M_1+a_1+b_1\right) \equiv 2^{e-1}{M_1}\cdot 2a_1+(a_1-b)(a_1+b_1) \equiv n \pmod{M},
\]
hence $ \frac{n}{d} \equiv 2^{e-1}M+a_1+b_1 $, and again $ t \equiv 2a_1 \pmod{M} $, $ s \equiv 2b_1 \pmod{M} $.
\end{proof}

Let
\[
	G_{\ell,m,M}(\tau) := \sum_{n=1}^\infty g_{\ell,m,M}(n)q^n,\qquad\text{where}\qquad g_{\ell,m,M}(n) := \sum_{\pm}\sum_{\substack{d\mid n\\d<\sqrt{n}\\d \equiv \pm m \pmod{M}}}d^\ell
\]
and
\[
	T_{\ell,m,M}(\tau) := \sum_{\pm}\sum_{\substack{n=1\\n \equiv \pm m \pmod{M}}}^\infty n^\ell q^{n^2}.
\]

\begin{lemma}
\label{lemmaLambda2}
Let $ \ell \in \Z_{\geq 0} $ and $ M = 2^eM_1 $, where $ e \in \Z_{\geq 1} $ and $ M_1 \in \Z_{\geq 1} $ is odd. Let $ f = 0 $ if $ e = 1 $ and $ f = e $ if $ e \geq 2 $ and let $ \chi_{M,0} $ denote the principal character modulo $ M $.
\begin{enumerate}
\item If $ m \in \Z $ is odd, then
\[
	\Lambda_{\ell,m,M}\vert U_4\otimes\chi_{M,0} = 0.
\]
\item If $ m = 2m_1 $, where $ m_1 \in \Z $, then
\[
	\Lambda_{\ell,m,M}\vert U_4\otimes\chi_{M,0} = 2^\ell\sum_{\substack{b_1 \pmod{2^{e-1}M_1}\\(m_1^2-b_1^2,2^{e-1}M_1)=1}}G_{\ell,m_1-b_1,2^{e-1}M_1}\vert S_{2^fM_1,m_1^2-b_1^2}\vert S_{2,1}+2^{\ell-1}T_{\ell,m_1,2^{e-1}M_1}\otimes \chi_{M,0}.
\]
\end{enumerate}
\end{lemma}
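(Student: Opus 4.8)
The plan is to prove both parts by matching the coefficient of $q^n$ on the two sides for every $n \geq 1$. Since the left-hand side is twisted by the principal character $\chi_{M,0}$, it is supported on $n$ coprime to $M$, so throughout I may restrict to such $n$; in particular the hypothesis $\gcd(n,M) = 1$ of \Cref{lemmaLambda1} is then satisfied. The coefficient of $q^n$ in $\Lambda_{\ell,m,M}\vert U_4$ is $\lambda_{\ell,m,M}(4n)$, and I would split this sum according to whether the index $s$ in a representation $t^2 - s^2 = 4n$, $t \equiv \pm m \pmod M$, equals $0$ or is $\geq 1$, treating the two pieces separately.

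The $s = 0$ part requires $4n = t^2$, hence $n = r^2$ and $t = 2r$; it contributes $\tfrac12\sum_{\pm}(2r)^\ell = 2^{\ell-1}r^\ell$ over those signs for which $2r \equiv \pm m \pmod M$. If $m$ is odd, no such sign exists, since an even number cannot be congruent to an odd number modulo the even $M$; combined with \Cref{lemmaLambda1}(1) applied to the $s \geq 1$ part, this yields part (1). If $m = 2m_1$, then, $M_1$ being odd, the congruence $2r \equiv 2m_1 \pmod M$ is equivalent to $r \equiv m_1 \pmod{2^{e-1}M_1}$, so the $s = 0$ part is exactly $2^{\ell-1}$ times the $q^{r^2}$-coefficient of $T_{\ell,m_1,2^{e-1}M_1}$, which after the $\chi_{M,0}$-twist accounts for the $T$-term on the right.

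For the $s \geq 1$ part, I would record the residue class $b$ of $s$ modulo $M$ and write it as $\sum_{\pm}\sum_{b \bmod M}\mu_{\ell,\pm m,b,M}(n)$. By \Cref{lemmaLambda1}(1) only the even classes $b = 2b_1$ contribute, and \Cref{lemmaLambda1}(2) evaluates $\mu_{\ell,\pm 2m_1,2b_1,M}(n)$ as $2^\ell$ times a sum over $d \mid n$, $d < \sqrt n$, $d \equiv \pm m_1 - b_1 \pmod{2^{e-1}M_1}$, subject to $n \equiv m_1^2 - b_1^2 \pmod{2^fM_1}$. Summing over the two signs of $m$ and reindexing the minus-sign summand by $b_1 \mapsto -b_1$ (which fixes the congruence on $n$ and turns $d \equiv -m_1 - b_1$ into $d \equiv -(m_1 - b_1)$) collapses the pair of divisor sums into $g_{\ell,m_1-b_1,2^{e-1}M_1}(n)$, so that the $s \geq 1$ part becomes $2^\ell\sum_{b_1}[\,n \equiv m_1^2 - b_1^2 \pmod{2^fM_1}\,]\,g_{\ell,m_1-b_1,2^{e-1}M_1}(n)$; since $\gcd(n,M) = 1$ forces $n$ odd, this is $2^\ell$ times the $q^n$-coefficient of $\sum_{b_1}G_{\ell,m_1-b_1,2^{e-1}M_1}\vert S_{2^fM_1,m_1^2-b_1^2}\vert S_{2,1}$. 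It then remains to check two compatibility points: that for $\gcd(n,M) = 1$ only $b_1$ with $\gcd(m_1^2 - b_1^2, 2^{e-1}M_1) = 1$ can satisfy $n \equiv m_1^2 - b_1^2 \pmod{2^fM_1}$, so restricting the $b_1$-sum as in the statement loses nothing; and, conversely, that with this restriction the $G$-term on the right is supported on $n$ coprime to $M$, so that the whole identity follows from the coefficient comparison on that range. I expect this last verification — threading the two moduli $2^{e-1}M_1$ and $2^fM_1$ through the sieving operators, with the cases $e = 1$ (where $f = 0$, and $S_{2,1}$ supplies oddness of $n$) and $e \geq 2$ (where $f = e$, and oddness is automatic) handled separately — to be the only genuinely delicate step; everything else is a routine matching of $q$-expansions.
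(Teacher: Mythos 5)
Your proposal is correct and follows essentially the same route as the paper: split $\lambda_{\ell,m,M}(4n)$ for $(n,M)=1$ into the $s=0$ piece (giving the $T$-term) and the $s\geq 1$ piece sorted by the residue of $s$ modulo $M$, apply \Cref{lemmaLambda1} to kill odd classes and evaluate the even ones as divisor sums, combine the $\pm$ signs via $b_1\mapsto -b_1$, and track the coprimality/congruence conditions through the sieving operators in the two cases $e=1$ and $e\geq 2$. The "delicate step" you flag at the end is handled in the paper exactly as you anticipate, so nothing further is needed.
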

\begin{proof}
By definition, the left-hand side equals
\[
	\sum_{\substack{n=1\\(n,M)=1}}^\infty \lambda_{\ell,m,M}(4n)q^n = \sum_{\substack{n=1\\(n,M)=1}}^\infty \sum_{\pm}\left(\sum_{\substack{t>s\geq 1\\t^2-s^2=4n\\t \equiv \pm m \pmod{M}}}(t-s)^\ell+\frac{1}{2}\sum_{\substack{t\geq 1\\t^2 = 4n\\t \equiv \pm m \pmod{M}}}t^\ell\right)q^n.
\]
Let us split this sum into two, namely
\begin{align*}
	\Sigma_1& := \sum_{\substack{n=1\\(n,M)=1}}^\infty \sum_{\pm}\left(\sum_{\substack{t>s\geq 1\\t^2-s^2=4n\\t \equiv \pm m \pmod{M}}}(t-s)^\ell\right)q^n,\\
	\Sigma_2& := \sum_{\substack{n=1\\(n,M)=1}}^\infty \sum_{\pm}\left(\frac{1}{2}\sum_{\substack{t\geq 1\\t^2 = 4n\\t \equiv \pm m \pmod{M}}}t^\ell\right)q^n.
\end{align*}
We have
\[
	\Sigma_1 = \sum_{b \pmod{M}}\sum_{\substack{n=1\\(n,M)=1}}^\infty\sum_{\pm}\mu_{\ell,\pm m, b}(n)q^n.
\]

Now we prove (1). Since $ m $ is odd, $ \mu_{\ell,\pm m, b}(n) = 0 $ by \Cref{lemmaLambda1} (1). The inner sum in $ \Sigma_2 $ is also zero.

Next, we prove (2). By \Cref{lemmaLambda1}, $ \mu_{\ell,\pm 2m_1, b}(n) = 0 $ unless $ b = 2b_1 $ and $ n \equiv m_1^2-b_1^2 \pmod{2^fM_1} $, hence
\begin{align*}
	\Sigma_1 = \sum_{b_1 \pmod{2^{e-1}M_1}}\sum_{\substack{n=1\\(n,M)=1\\n \equiv m_1^2-b_1^2\pmod{2^fM_1}}}^\infty\sum_{\pm}\mu_{\ell,\pm 2m_1, 2b_1}(n)q^n.
\end{align*}
Consider $ n $ such that $ n \equiv m_1^2-b_1^2 \pmod{2^fM_1} $. If $ e \geq 2 $, then $ f = e $, and $ (m_1^2-b_1^2,2^{e-1}M_1) = 1 $ if and only if $ (n,M) = 1 $. However, if $ e = 1 $, then $ f = 0 $, and $ (m_1^2-b_1^2,2^{e-1}M_1) = 1 $ if and only if $ (n,M_1) = 1 $. Thus,
\[
	\Sigma_1 = \sum_{\substack{b_1 \pmod{2^{e-1}M_1}\\(m_1^2-b_1^2,2^{e-1}M_1) = 1}}\sum_{\substack{n=1\\n \equiv 1 \pmod{2}\\n \equiv m_1^2-b_1^2\pmod{2^fM_1}}}^\infty\sum_{\pm}\mu_{\ell,\pm 2m_1, 2b_1}(n)q^n.
\]
Using \Cref{lemmaLambda1} and putting together the terms for $ b_1 $ and $ -b_1 $, we get
\begin{align*}
	\Sigma_1& = \sum_{\substack{b_1 \pmod{2^{e-1}M_1}\\(m_1^2-b_1^2,2^{e-1}M_1) = 1}}\sum_{\substack{n=1\\n \equiv 1 \pmod{2}\\n \equiv m_1^2-b_1^2\pmod{2^fM_1}}}^\infty\sum_{\pm}\left(2^\ell \sum_{\substack{d \mid n\\d < \sqrt{n}\\d \equiv \pm m_1-b_1\pmod{2^{e-1}M_1}}}d^\ell\right)q^n\\
	& = 2^\ell\sum_{\substack{b_1 \pmod{2^{e-1}M_1}\\(m_1^2-b_1^2,2^{e-1}M_1)=1}}\sum_{\substack{n=1\\n\equiv 1 \pmod{2}\\n \equiv m_1^2-b_1^2\pmod{2^fM_1}}}^\infty\sum_{\pm}\left(\sum_{\substack{d \mid n\\d < \sqrt{n}\\d \equiv \pm(m_1-b_1)\pmod{2^{e-1}M_1}}}d^\ell\right)q^n\\
	& = 2^\ell\sum_{\substack{b_1 \pmod{2^{e-1}M_1}\\(m_1^2-b_1^2,2^{e-1}M_1)=1}}G_{\ell,m_1-b_1,2^{e-1}M_1}\vert S_{2^fM_1,m_1^2-b_1^2}\vert S_{2,1}.
\end{align*}

Setting $ t = 2t_1 $ in $ \Sigma_2 $, we get
\[
	\Sigma_2 = 2^{\ell-1}\sum_{\pm}\sum_{\substack{t_1\geq 1\\t_1 \equiv \pm m_1 \pmod{2^{e-1}M_1}\\(t_1, M) = 1}}t_1^\ell q^{t_1^2} = 2^{\ell-1}T_{\ell,m_1,2^{e-1}M_1}\otimes\chi_{M,0}.\qedhere
\]
\end{proof}

We apply \Cref{lemmaLambda2} to the particular cases $ M = 6 $ and $ M = 8 $.

\begin{lemma}
\label{lemmaLambda3}
Let $ \ell \in \Z_{\geq 0} $ and $ m \in \Z $. If $ \chi_{6,0} $ denotes the principal character modulo $ 6 $, then
\[
	\Lambda_{\ell,m,6}\vert U_4\otimes \chi_{6,0} = \begin{cases}
		2^{\ell+1}G_{\ell,1,3}\vert S_{6,5}&\text{if }m \equiv 0 \pmod{6},\\
		0&\text{if }m \equiv 1,3,5 \pmod{6}\\
		2^\ell G_{\ell,1,3}\vert S_{6,1}+2^{\ell-1}T_{\ell,1,6}&\text{if }m \equiv 2,4 \pmod{6}.
	\end{cases}
\]
\end{lemma}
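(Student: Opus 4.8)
The plan is to obtain \Cref{lemmaLambda3} as the specialization of \Cref{lemmaLambda2} to $M = 6$, for which $e = 1$, $M_1 = 3$, and hence $f = 0$ and $2^{e-1}M_1 = 2^{f}M_1 = 3$. The case $m \equiv 1,3,5 \pmod 6$ is precisely the case where $m$ is odd in part (1) of \Cref{lemmaLambda2}, which gives $\Lambda_{\ell,m,6}\vert U_4 \otimes \chi_{6,0} = 0$ with nothing more to do. For $m$ even I would write $m = 2m_1$ and substitute into part (2) of \Cref{lemmaLambda2}, which here reads
\[
	\Lambda_{\ell,m,6}\vert U_4 \otimes \chi_{6,0} = 2^\ell \sum_{\substack{b_1 \pmod 3\\(m_1^2-b_1^2,\,3) = 1}} G_{\ell,m_1-b_1,3}\vert S_{3,m_1^2-b_1^2}\vert S_{2,1} + 2^{\ell-1}T_{\ell,m_1,3}\otimes\chi_{6,0}.
\]
Everything then reduces to a finite computation modulo $3$.

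For $m \equiv 0 \pmod 6$ we have $m_1 \equiv 0 \pmod 3$, so the condition $(m_1^2-b_1^2,3)=1$ retains exactly $b_1 \equiv \pm 1 \pmod 3$, and for both of these $m_1-b_1 \equiv \mp 1$ and $m_1^2-b_1^2 \equiv 2 \pmod 3$. Since $G_{\ell,1,3} = G_{\ell,-1,3}$ (immediate from the $\pm$ in the definition of $G_{\ell,m,M}$), the two summands agree and the first term becomes $2^{\ell+1}G_{\ell,1,3}\vert S_{3,2}\vert S_{2,1}$; moreover $T_{\ell,0,3}$ is supported on exponents $n^2$ with $3 \mid n$, on which $\chi_{6,0}$ vanishes, so the $T$-term drops out. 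For $m \equiv 2,4 \pmod 6$, note that $4 \equiv -2 \pmod 6$ and $\Lambda_{\ell,m,M}$ depends on $m$ modulo $M$ only up to sign (because of the $\sum_\pm$ in its definition), so the two classes give the same function and I may take $m = 2$, i.e. $m_1 \equiv 1 \pmod 3$. Then $(1-b_1^2,3)=1$ forces $b_1 \equiv 0 \pmod 3$, with $m_1-b_1 \equiv 1$ and $m_1^2-b_1^2 \equiv 1 \pmod 3$, so the first term is $2^\ell G_{\ell,1,3}\vert S_{3,1}\vert S_{2,1}$.

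What is left is to rewrite the composite sieving operators and the remaining $T$-term. By the Chinese Remainder Theorem, $S_{3,c}\vert S_{2,1}$ keeps exactly the coefficients of $q^n$ with $n \equiv c \pmod 3$ and $n$ odd, so $S_{3,2}\vert S_{2,1} = S_{6,5}$ and $S_{3,1}\vert S_{2,1} = S_{6,1}$. For the surviving $T$-term (case $m \equiv 2,4$), twisting $T_{\ell,1,3}$ by $\chi_{6,0}$ retains precisely the exponents $n^2$ with $n$ odd and coprime to $3$, that is, $n \equiv \pm 1 \pmod 6$, whence $T_{\ell,1,3}\otimes\chi_{6,0} = T_{\ell,1,6}$. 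Assembling these identities yields the three cases in the statement. I do not expect a real obstacle: the argument is entirely residue-and-coprimality bookkeeping modulo $3$ layered on \Cref{lemmaLambda2}, the only points needing mild care being the sign symmetry $m_1 \mapsto -m_1$, the coincidence $G_{\ell,1,3} = G_{\ell,-1,3}$, and the identification $T_{\ell,1,3}\otimes\chi_{6,0} = T_{\ell,1,6}$.
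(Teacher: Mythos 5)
Your proposal is correct and follows essentially the same route as the paper: specialize \Cref{lemmaLambda2} with $e=1$, $M_1=3$, $f=0$, enumerate the admissible residues $b_1 \pmod{3}$ in each case, and collapse the composite sieves via $S_{3,2}\vert S_{2,1}=S_{6,5}$, $S_{3,1}\vert S_{2,1}=S_{6,1}$, and $T_{\ell,1,3}\otimes\chi_{6,0}=T_{\ell,1,6}$. If anything, you are slightly more explicit than the paper in justifying why the $T$-term vanishes when $m_1\equiv 0\pmod 3$ and in spelling out the sign symmetries $G_{\ell,1,3}=G_{\ell,-1,3}$ and $\Lambda_{\ell,2,6}=\Lambda_{\ell,4,6}$.
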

\begin{proof}
This follows from \Cref{lemmaLambda2} with $ e = 1 $, $ M_1 = 3 $, and $ f = 0 $.

For $ m \equiv 0 \pmod{6} $, $ b_1 $ runs over $ \{1,2\} $, hence
\[
	\Lambda_{\ell,0,6}\vert U_4\otimes \chi_{6,0} = 2^\ell\left(G_{\ell,-1,3}\vert S_{3,-1}\vert S_{2,1}+G_{\ell,-2,3}\vert S_{3,-1}\vert S_{2,1}\right) = 2^{\ell+1}G_{\ell,1,3}\vert S_{5,6}.
\]

For $ m \equiv 1,3,5 \pmod{6} $, the function is zero by \Cref{lemmaLambda2} (1).

For $ m = 2m_1 \equiv 2, 4 \pmod{6} $, only the term with $ b_1 = 0 $ appears in the sum, and we have
\[
	\Lambda_{\ell,m,6}\vert U_4\otimes \chi_{6,0} = 2^\ell G_{\ell,m_1,3}\vert S_{3,m_1^2}\vert S_{2,1}+2^{\ell-1}T_{\ell,1,3}\otimes\chi_{6,0} = 2^\ell G_{\ell,1,3}\vert S_{6,1}+2^{\ell-1}T_{\ell,1,6}.\qedhere
\]
\end{proof}

\begin{lemma}
\label{lemmaLambda4}
Let $ \ell \in \Z_{\geq 0} $ and $ m \in \Z $. If $ \chi_{8,0} $ denotes the principal character modulo $ 8 $, then
\[
	\Lambda_{\ell,m,8}\vert U_4\otimes\chi_{8,0} = \begin{cases}
		2^{\ell+1}G_{\ell,1,4}\vert S_{8,7}&\text{if }m \equiv 0 \pmod{8},\\
		0&\text{if }m \equiv 1,3,5,7 \pmod{8},\\
		2^\ell G_{\ell,1,4}\vert S_{4,1}+2^{\ell-1}T_{\ell,1,4}&\text{if }m \equiv 2, 6 \pmod{8},\\
		2^{\ell+1}G_{\ell,1,4}\vert S_{8,3}&\text{if }m \equiv 4 \pmod{8}.
	\end{cases}
\]
\end{lemma}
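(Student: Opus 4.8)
The plan is to specialize \Cref{lemmaLambda2} to $M = 8$, that is, to $e = 3$, $M_1 = 1$, and therefore $f = 3$, so that $2^{e-1}M_1 = 4$ and $2^fM_1 = 8$; the argument then mirrors the proof of \Cref{lemmaLambda3}. For $m$ odd, part (1) of \Cref{lemmaLambda2} gives $\Lambda_{\ell,m,8}\vert U_4\otimes\chi_{8,0} = 0$ outright, which handles $m \equiv 1,3,5,7 \pmod 8$. For $m = 2m_1$, part (2) yields
\[
	\Lambda_{\ell,m,8}\vert U_4\otimes\chi_{8,0} = 2^\ell\sum_{\substack{b_1 \pmod 4\\(m_1^2-b_1^2,4)=1}}G_{\ell,m_1-b_1,4}\vert S_{8,m_1^2-b_1^2}\vert S_{2,1} + 2^{\ell-1}T_{\ell,m_1,4}\otimes\chi_{8,0},
\]
and it remains to simplify this in the classes $m \equiv 0$, $m \equiv 4$, and $m \equiv 2,6 \pmod 8$.

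First I would note two simplifications valid for every admissible $b_1$. The coprimality condition forces $m_1 \not\equiv b_1 \pmod 2$, so $m_1 - b_1$ is odd; since $g_{\ell,m,M}(n)$ (and hence $G_{\ell,m,M}$) depends on $m$ only through $\pm m \bmod M$, and $1, 3$ are the two odd residues modulo $4$ with $3 \equiv -1$, every surviving $G$-factor equals $G_{\ell,1,4}$. Moreover $m_1^2 - b_1^2$ is then odd, so $S_{2,1}$ is redundant after $S_{8,m_1^2-b_1^2}$, and each summand collapses to $2^\ell G_{\ell,1,4}\vert S_{8,m_1^2-b_1^2}$.

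Then I would run through the three cases. If $m \equiv 0 \pmod 8$, then $m_1 \equiv 0 \pmod 4$, the admissible $b_1$ are $1$ and $3$, and $m_1^2-b_1^2 \equiv -1 \equiv 7 \pmod 8$ for both, while $T_{\ell,m_1,4}\otimes\chi_{8,0} = T_{\ell,0,4}\otimes\chi_{8,0} = 0$ since every exponent $n^2$ in $T_{\ell,0,4}$ is divisible by $8$; this gives $2^{\ell+1}G_{\ell,1,4}\vert S_{8,7}$. If $m \equiv 4 \pmod 8$, then $m_1 \equiv 2 \pmod 4$, again $b_1 \in \{1,3\}$, now $m_1^2-b_1^2 \equiv 4 - 1 \equiv 3 \pmod 8$ for both, and $T_{\ell,m_1,4}\otimes\chi_{8,0} = 0$ because each exponent in $T_{\ell,2,4}$ has the form $4 \cdot(\text{odd})$; this gives $2^{\ell+1}G_{\ell,1,4}\vert S_{8,3}$. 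If $m \equiv 2,6 \pmod 8$, then $m_1$ is odd, the admissible $b_1$ are $0$ and $2$, with $m_1^2-b_1^2 \equiv 1 \pmod 8$ and $m_1^2-b_1^2 \equiv -3 \equiv 5 \pmod 8$ respectively; since $S_{8,1} + S_{8,5} = S_{4,1}$ on $q$-expansions and $T_{\ell,m_1,4}\otimes\chi_{8,0} = T_{\ell,1,4}\otimes\chi_{8,0} = T_{\ell,1,4}$ (the exponents being odd squares, hence coprime to $8$), this gives $2^\ell G_{\ell,1,4}\vert S_{4,1} + 2^{\ell-1}T_{\ell,1,4}$.

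The computation is routine; the only points that need care are the bookkeeping of residues modulo $8$ and the behavior of the sieving operators — verifying that $S_{8,m_1^2-b_1^2}\vert S_{2,1} = S_{8,m_1^2-b_1^2}$ when $m_1^2 - b_1^2$ is odd, that $S_{8,1}+S_{8,5} = S_{4,1}$, and that in each case the tail $T_{\ell,m_1,4}\otimes\chi_{8,0}$ is either annihilated by the twist or left unchanged.
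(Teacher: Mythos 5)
Your proposal is correct and follows essentially the same route as the paper: specialize \Cref{lemmaLambda2} with $e=3$, $M_1=1$, $f=3$ and work through the residue classes of $m$ modulo $8$, identifying every surviving $G$-factor with $G_{\ell,1,4}$ and combining $S_{8,1}+S_{8,5}=S_{4,1}$ in the case $m\equiv 2,6\pmod 8$. If anything, you are more explicit than the paper about the redundancy of $S_{2,1}$ and about why the twist $\otimes\chi_{8,0}$ annihilates or preserves the $T$-term in each case.
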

\begin{proof}
This follows from \Cref{lemmaLambda2} with $ e = 3 $, $ M_1 = 1 $, and $ f = 3 $.

For $ m \equiv 0 \pmod{8} $, $ b_1 $ runs over $ \{1,3\} $, hence
\[
	\Lambda_{\ell,0,8}\vert U_4\otimes\chi_{8,0} = 2^\ell\left(G_{\ell,-1,4}\vert S_{8,-1}+G_{\ell,-3,4}\vert S_{8,-1}\right) = 2^{\ell+1}G_{\ell,1,4}\vert S_{8,7}.
\]

For $ m $ odd, the function is zero by \Cref{lemmaLambda2} (1).

For $ m = 2m_1 \equiv 2,6 \pmod{8} $, $ b_1 $ runs over $ \{0, 2\} $, hence
\begin{align*}
	\Lambda_{\ell,m,8}\vert U_4\otimes\chi_{8,0}& = 2^{\ell}\left(G_{\ell,m_1,4}\vert S_{8,m_1^2}+G_{\ell,m_1-2,4}\vert S_{8,m_1^2-4}\right)+2^{\ell-1}T_{\ell,m_1,4}\\
	& = 2^\ell\left(G_{\ell,1,4}\vert S_{8,1}+G_{\ell,1,4}\vert S_{8,5}\right)+2^{\ell-1}T_{\ell,1,4}\\
	& = 2^\ell G_{\ell,1,4}\vert S_{4,1}+2^{\ell-1}T_{\ell,1,4}.
\end{align*}

For $ m = 2m_1 = 4 $, $ b_1 $ runs over $ \{1,3\} $, hence
\[
	\Lambda_{\ell,4,8}\vert U_4\otimes \chi_{8,0} = 2^\ell\left(G_{\ell,1,4}\vert S_{8,3}+G_{\ell,-1,4}\vert S_{8,-5}\right) = 2^{\ell+1}G_{\ell,1,4}\vert S_{8,3}.\qedhere
\]
\end{proof}

To prove the explicit formulas for $ H_{m,M}(p) $ in \Cref{secExp}, we twist the quasimodular form in \Cref{lemmaHLambda} by the principal character $ \chi_{M,0} $. It is necessary to know to which space the resulting function belongs. The answer is provided by the next lemma. An analogous result for weight $ \kappa \in \frac{1}{2}+\Z $ is \cite[Lemma 2.3 (2)]{BK2}.

\begin{lemma}
\label{lemmaS}
Let $ \kappa \in \Z $, $ N_1, N_2 \in \Z_{\geq 1} $, and $ \chi $ be a Dirichlet character of conductor $ N_\chi $ such that $ N_\chi \mid N_2 \mid N_1 $. Let $ M \in \Z_{\geq 1} $ and $ m \in \Z $. If $ f = \sum_{n=0}^\infty a(n)q^n $ satisfies weight $ \kappa $ modularity on $ \Gamma_0(N_1)\cap\Gamma_1(N_2) $ with character $\chi $ and
\[
	\Gamma := \Gamma_0(\lcm(N_1,M^2,MN_2,MN_\chi))\cap \Gamma_1(\lcm(N_2,M)),
\]
then $ f\vert S_{M,m} $ satisfies weight $ \kappa $ modularity on $ \Gamma $ with character $ \chi $. In particular, if $ \chi_{M,0} $ denotes the principal character modulo $ M $, then $ f\otimes\chi_{M,0} $ satisfies weight $ \kappa $ modularity on $ \Gamma $ with character $ \chi $.

Moreover, if $ M \mid 24 $, then we can replace $ \Gamma $ with $ \Gamma_0(\lcm(N_1,M^2,MN_2,MN_\chi))\cap\Gamma_1(N_2) $.
\end{lemma}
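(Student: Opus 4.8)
The plan is to realize $S_{M,m}$ as an average of horizontal translates and then to follow those translate matrices around $\Gamma$; this is the integral-weight counterpart of the argument behind \cite[Lemma 2.3 (2)]{BK2}. Write $\zeta_M := e^{2\pi i/M}$ and, for $j \in \Z$, put $A_j := \begin{pmatrix}1 & j/M\\ 0 & 1\end{pmatrix} \in \SL_2(\R)$, so that $(f\vert_\kappa A_j)(\tau) = f(\tau + j/M)$. The orthogonality relation $\frac{1}{M}\sum_{j \bmod M}\zeta_M^{j(n-m)}$, equal to $1$ when $n\equiv m\pmod M$ and $0$ otherwise, gives the identity of holomorphic functions on $\mathbb{H}$
\[
	f\vert S_{M,m} = \frac{1}{M}\sum_{j \bmod M}\zeta_M^{-jm}\, f\vert_\kappa A_j .
\]
Since $\Gamma$ is a congruence subgroup containing $T$ (and $N_\chi\mid N_2$ divides its $\Gamma_1$-level, so $\chi$ is a legitimate Nebentypus), it suffices to show $(f\vert S_{M,m})\vert_\kappa\gamma = \chi(d)\,(f\vert S_{M,m})$ for each $\gamma = \begin{pmatrix}a&b\\c&d\end{pmatrix}\in\Gamma$, and by the cocycle property of the slash operator this reduces to analysing each $f\vert_\kappa(A_j\gamma)$.

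The core step is a matrix factorization $A_j\gamma = \gamma_j A_{j'}$ with $\gamma_j$ lying in the group on which $f$ transforms. As $\Gamma\subseteq\Gamma_0(M^2)$, write $c = M^2 c''$; from $ad-bc=1$ and $M\mid c$ we get $\gcd(a,M)=1$ and $ad\equiv 1\pmod M$, so we may choose $j' \equiv a^{-1}d\,j \equiv d^2 j \pmod M$, and $j\mapsto j'$ is then a bijection of $\Z/M\Z$. Putting $\gamma_j := A_j\gamma A_{j'}^{-1}$ and multiplying out, its lower-left entry is $c$, its diagonal entries are $a + (c/M)j$ and $d_j := d - (c/M)j'$, its upper-right entry lies in $\Z$ because $M\mid c/M$ forces $a+(c/M)j\equiv a\pmod M$ which, together with $a j'\equiv d j\pmod M$, kills the relevant numerator, and $\det\gamma_j = 1$ automatically. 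Now I would peel off one divisibility at a time: $N_1\mid c$ gives $\gamma_j\in\Gamma_0(N_1)$; $MN_2\mid c$ gives $N_2\mid c/M$, so the diagonal entries of $\gamma_j$ are $\equiv a\equiv d\equiv 1\pmod{N_2}$ and $\gamma_j\in\Gamma_1(N_2)$; and $MN_\chi\mid c$ gives $N_\chi\mid c/M$, so $d_j\equiv d\pmod{N_\chi}$ (with $\gcd(d_j,N_\chi)=1$, since $\gcd(c,d)=1$), whence $\chi(d_j)=\chi(d)$.

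Substituting $f\vert_\kappa(A_j\gamma) = (f\vert_\kappa\gamma_j)\vert_\kappa A_{j'} = \chi(d_j)\,f\vert_\kappa A_{j'} = \chi(d)\,f\vert_\kappa A_{j'}$ into the average and reindexing by the bijection $j\mapsto j'$ (so $j\equiv d^{-2}j'\pmod M$) yields
\[
	(f\vert S_{M,m})\vert_\kappa\gamma = \chi(d)\,\frac{1}{M}\sum_{j'\bmod M}\zeta_M^{-d^{-2} j' m}\, f\vert_\kappa A_{j'},
\]
which equals $\chi(d)\,(f\vert S_{M,m})$ once $d^2\equiv 1\pmod M$. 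For the main claim this holds because $\Gamma\subseteq\Gamma_1(M)$ forces $d\equiv 1\pmod M$. For the ``moreover'' clause one keeps only $\Gamma_1(N_2)$ but assumes $M\mid 24$; then the exponent of $(\Z/M\Z)^\times$ divides $2$, and since $\gcd(d,M)=1$ (still from $M^2\mid c$) we again get $d^2\equiv 1\pmod M$ with no extra constraint on $\gamma$. Finally $f\otimes\chi_{M,0} = \sum_{\gcd(m,M)=1} f\vert S_{M,m}$ is a finite sum of functions each obeying weight $\kappa$ modularity on $\Gamma$ with character $\chi$, hence so is $f\otimes\chi_{M,0}$.

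The obstacle here is organizational rather than conceptual: one must pin down exactly which factor of $\lcm(N_1,M^2,MN_2,MN_\chi)$ — and which $\Gamma_1$-level — is responsible for each of integrality of $\gamma_j$, the membership $\gamma_j\in\Gamma_0(N_1)\cap\Gamma_1(N_2)$, the identity $\chi(d_j)=\chi(d)$, and the collapse of the reindexed sum. The single real observation is that the $\Gamma_1(M)$-factor serves only to guarantee $d^2\equiv 1\pmod M$, and that this is automatic precisely when $M\mid 24$, which accounts for the dichotomy in the statement.
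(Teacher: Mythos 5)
Your proposal is correct and follows essentially the same route as the paper: write $S_{M,m}$ as an average of slashes by $\begin{pmatrix}1&j/M\\0&1\end{pmatrix}$, factor $A_j\gamma=\gamma_j A_{j'}$ with $j'\equiv d^2 j\pmod M$, use $M^2\mid c$, $MN_2\mid c$, $MN_\chi\mid c$ for integrality, $\Gamma_1(N_2)$-membership, and $\chi(d_j)=\chi(d)$ respectively, and reindex to get $\chi(d)\,f\vert S_{M,md^{-2}}$, with the $\Gamma_1(M)$ condition (or $M\mid 24$) collapsing this to $f\vert S_{M,m}$. The only cosmetic difference is that you phrase the final congruence in terms of $d^2\equiv 1\pmod M$ where the paper uses the equivalent $a^2\equiv 1\pmod M$.
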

\begin{proof}
We show that if $ \gamma := \begin{pmatrix}a&b\\c&d\end{pmatrix} \in \Gamma_0(\lcm(N_1,M^2,MN_2,MN_\chi))\cap \Gamma_1(N_2) $, then
\[
	\left(f\vert S_{M,m}\right)\vert_\kappa \gamma = \chi(d)f\vert S_{M,ma^2}.
\]
We have
\begin{align*}
	\left(f\vert S_{M,m}\right)(\tau)& = \sum_{n=0}^\infty a(n)q^n\frac{1}{M}\sum_{\ell\pmod{M}}e^{\frac{-2\pi im\ell}{M}}e^{\frac{2\pi i n\ell}{M}} = \frac{1}{M}\sum_{\ell\pmod{M}}e^{\frac{-2\pi im\ell}{M}}f\left(\tau+\frac{\ell}{M}\right)\\
	& = \frac{1}{M}\sum_{\ell\pmod{M}}e^{\frac{-2\pi im\ell}{M}}f\bigg\vert_\kappa \begin{pmatrix}1&\frac{\ell}{M}\\0&1\end{pmatrix}(\tau),
\end{align*}
hence
\[
	(f\vert S_{M,m})|_\kappa \gamma = \frac{1}{M}\sum_{\ell \pmod{M}}e^{\frac{-2\pi im\ell}{M}}f\bigg\vert_\kappa \begin{pmatrix}1&\frac{\ell}{M}\\0&1\end{pmatrix}\gamma.
\]
For every $ \ell = 0, \dots, M-1 $, let $ \ell' := d^2\ell $. Since $ ad \equiv 1 \pmod{M} $, we have $ a\ell' \equiv d\ell \pmod{M} $. Let $ \gamma' $ be a $ 2 \times 2 $ matrix of determinant $ 1 $ such that
\[
	\begin{pmatrix}1&\frac{\ell}{M}\\0&1\end{pmatrix}\gamma = \gamma'\begin{pmatrix}1&\frac{\ell'}{M}\\0&1\end{pmatrix},
\]
i.e.,
\[
	\gamma' = \begin{pmatrix}
	1&\frac{\ell}{M}\\
	0&1
	\end{pmatrix}\begin{pmatrix}
	a&b\\
	c&d
	\end{pmatrix}\begin{pmatrix}
	1&-\frac{\ell'}{M}\\
	0&1
	\end{pmatrix} = \begin{pmatrix}
	a+\frac{c\ell}{M}&-\frac{\ell' a}{M}-\frac{c\ell\ell'}{M^2}+b+\frac{d\ell}{M}\\
	c&-\frac{c\ell'}{M}+d
	\end{pmatrix}.
\]
The condition $ a\ell' \equiv d\ell \pmod{M} $ together with $ M^2 \mid c $ implies that the entries of $ \gamma' $ are integers. Because $ MN_2 \mid c $, we have $ a+\frac{c\ell}{M} \equiv a \equiv 1 \pmod{N_2} $ and $ -\frac{c\ell}{M}+d \equiv d \equiv 1\pmod{N_2} $, and we get $ \gamma' \in \Gamma_0(N_1)\cap\Gamma_1(N_2) $. Thus,
\[
	(f\vert S_{M,m})\vert_\kappa \gamma = \frac{1}{M}\sum_{\ell \pmod{M}}e^{\frac{-2\pi im\ell}{M}}f\bigg\vert_\kappa \gamma'\begin{pmatrix}1&\frac{\ell'}{M}\\0&1\end{pmatrix} = \chi\left(\frac{-c\ell'}{M}+d\right)\frac{1}{M}\sum_{\ell \pmod{M}}e^{\frac{-2\pi im\ell}{M}}f\bigg\vert_\kappa \begin{pmatrix}1&\frac{\ell'}{M}\\0&1\end{pmatrix}.
\]
We assume $ MN_\chi \mid c $, hence $ \chi\left(\frac{-c\ell'}{M}+d\right) = \chi(d) $. Changing the index of summation to $ \ell' $ and using $ \ell \equiv d^{-2}\ell' \equiv a^2\ell' \pmod{M} $, we get
\[
	(f\vert S_{M,m})\vert_\kappa \gamma = \chi(d)\frac{1}{M}\sum_{\ell' \pmod{M}}e^{\frac{-2\pi i ma^2\ell}{M}}f\bigg\vert_\kappa\begin{pmatrix}1&\frac{\ell'}{M}\\0&1\end{pmatrix} = \chi(d)f\vert S_{M,ma^2}.
\]
If we additionally assume $ \gamma \in \Gamma_1(M) $, then $ a \equiv 1 \pmod{M} $, hence
\[
	(f\vert S_{M,m})\vert_\kappa\gamma = \chi(d)f\vert S_{M,m}
\]
and  $ f\vert S_{M,m} $ satisfies weight $ \kappa $ modularity on $ \Gamma $. To prove the ``moreover" part, we note that if $ M \mid 24 $, then $ a^2 \equiv 1 \pmod{M} $ for every $ a $ such that $ (a,M) = 1 $, hence $ f\vert S_{M,ma^2} = f\vert S_{M,m} $.
\end{proof}

\section{Explicit formulas modulo 6 and 8}
\label{secExp}

In this section, we prove the explicit formulas for $ H_{m,M}(p) $, where $ M \in \{6,8\} $ and $ p $ is a prime. We begin by proving that certain functions are CM cusp forms and identify the spaces to which they belong.

For $ k \in \Z_{\geq 2} $ and an odd Dirichlet character $ \chi $, let
\[
	\psi_k(\chi,n) := \sum_{x^2+ky^2 = n}\chi(x)x,
\]
where the sum runs over all $ x, y \in \Z $ such that $ x^2+ky^2 = n $, and
\[
	\Psi_k(\chi, \tau) := \frac{1}{2}\sum_{n=1}^\infty \psi_k(\chi,n)q^n.
\]
The $ \frac{1}{2} $ in front of the sum normalizes it in such a way that the coefficient of $ q $ equals $ 1 $. If we set
\[
	\theta_0(\tau) := \sum_{y \in \Z}q^{y^2}
\]
and
\[
	\theta(\chi,1,\tau) := \frac{1}{2}\sum_{x\in\Z}\chi(x)xq^{x^2},
\]
then we see that
\[
	\Psi_k(\chi,\tau) = \theta(\chi,1,\tau)\cdot\left(\theta_0\vert V_k\right)(\tau).
\]

\begin{lemma}
\label{lemmaCM}
Let $ \chi_{-3} $ and $ \chi_{-4} $ denote the non-principal characters modulo $ 3 $ and $ 4 $, respectively. The functions $ \Psi_k(\chi, \tau) $ satisfy the following:
\begin{enumerate}
\item $ \Psi_3(\chi_{-3},\tau) $ is a newform in $ S_2(\Gamma_0(36)) $ and has complex multiplication by $ \Q(\sqrt{-3}) $ (LMFDB label 36.2.a.a).
\item $ \Psi_4(\chi_{-4},\tau) $ is a newform in $ S_2(\Gamma_0(64)) $ and has complex multiplication by $ \Q(i) $ (LMFDB label 64.2.a.a).
\item $ \Psi_2(\chi_{-4},\tau) \in S_2(\Gamma_0(64),\legendre{2}{\cdot}) $. It satisfies
\[
	\Psi_2(\chi_{-4},\tau) = \frac{f(\tau)+\overline{f}(\tau)}{2}+\frac{f(\tau)-\overline{f}(\tau)}{2i},
\]
where $ f(\tau) = \sum_{n=1}^\infty a(n)q^n $ is the newform with LMFDB label 64.2.b.a and $ \overline{f}(\tau) = \sum_{n=1}^\infty \overline{a(n)}q^n $. The form $ f $ has complex multiplication by $ \Q(\sqrt{-2}) $.
\end{enumerate}
\end{lemma}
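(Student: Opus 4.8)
The plan is to use the factorization $\Psi_k(\chi,\tau)=\theta(\chi,1,\tau)\cdot(\theta_0\vert V_k)(\tau)$ recorded just above the lemma, determine which space of weight-$2$ cusp forms the product lands in, and then identify the form itself and read off its complex multiplication from the support of its Fourier coefficients.

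First I would write down the modular properties of the two factors. Since $\theta_0=\sum_{y\in\Z}q^{y^2}\in M_{1/2}(\Gamma_0(4))$, the $V$-operator rule gives that $\theta_0\vert V_k$ satisfies weight $\tfrac12$ modularity on $\Gamma_0(4k)$ with character $\legendre{k}{\cdot}$. The series $\theta(\chi,1,\tau)=\tfrac12\sum_{x\in\Z}\chi(x)x\,q^{x^2}$ is, for an odd primitive character $\chi$ of conductor $r$, the classical weight-$\tfrac32$ unary theta series attached to the linear (hence harmonic) spherical polynomial $x$; by Shimura's transformation law it lies in $S_{3/2}(\Gamma_0(4r^2),\chi\chi_{-4})$, and it is a cusp form since the spherical polynomial is non-constant. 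Multiplying the two factors, $\Psi_k(\chi,\tau)$ is a holomorphic cusp form of weight $2$ on $\Gamma_0\bigl(\lcm(4r^2,4k)\bigr)$ with nebentypus $\chi\chi_{-4}\legendre{k}{\cdot}$. Evaluating the level and the product of Kronecker symbols in the three cases $(r,k)=(3,3),(4,4),(4,2)$ places $\Psi_3(\chi_{-3},\tau)$ in $S_2(\Gamma_0(36))$, $\Psi_4(\chi_{-4},\tau)$ in $S_2(\Gamma_0(64))$, and $\Psi_2(\chi_{-4},\tau)$ in $S_2\bigl(\Gamma_0(64),\legendre{2}{\cdot}\bigr)$; the first two have trivial nebentypus because $\chi_{-3}\chi_{-4}\legendre{3}{\cdot}=\legendre{9}{\cdot}$ and $\chi_{-4}^2\legendre{4}{\cdot}=\legendre{4}{\cdot}$ are trivial on the relevant residue classes.

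To pin down the forms: in case (1), $S_2(\Gamma_0(36))$ is one-dimensional with no oldforms (every proper divisor of $36$ gives a genus-zero modular curve), so its unique normalized element is a newform, and since $\Psi_3(\chi_{-3},\tau)=q+O(q^2)$ is nonzero it equals that newform, namely $\eta(6\tau)^4$ ($36.2.a.a$), which is classically known to have CM by $\Q(\sqrt{-3})$. In cases (2) and (3) the ambient spaces are larger, so I would invoke the Sturm bound \Cref{thmSturm}, using \Cref{lemmaInd} to compute $[\SL_2(\Z):\Gamma_0(64)]$, and check that $\Psi_4(\chi_{-4},\tau)$ and $\Psi_2(\chi_{-4},\tau)$ agree with $64.2.a.a$ and with $\tfrac{f+\overline f}{2}+\tfrac{f-\overline f}{2i}$ on all Fourier coefficients below the bound; the coefficients of the left-hand sides are immediate from $\Psi_k(\chi,\tau)=\tfrac12\sum_{x,y\in\Z}\chi(x)x\,q^{x^2+ky^2}$, and those of the newforms come from their defining data. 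The CM assertions then follow from the support of the coefficients: the $p$-th coefficient of $\Psi_k(\chi,\tau)$ vanishes unless $p=x^2+ky^2$, i.e.\ unless $p$ splits in $\Q(\sqrt{-3})$, $\Q(i)$, respectively $\Q(\sqrt{-2})$, and for a newform the vanishing of $a_p$ on a density-$\tfrac12$ set of inert primes is equivalent to CM by the corresponding field; in case (3) this shows $\Psi_2(\chi_{-4},\tau)$, hence $f$, is fixed by the twist $\otimes\legendre{-2}{\cdot}$. (Alternatively, one can realize each $\Psi_k(\chi,\tau)$, or the relevant conjugate-pair combination, directly as a theta series of a Hecke character of the respective imaginary quadratic field, which makes the CM manifest from the outset.)

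I expect the main obstacle to be bookkeeping rather than anything deep: determining the nebentypus of $\theta(\chi,1,\tau)$ and of the product precisely requires care with the half-integral-weight multiplier system (the extended Legendre symbol and the factors $\varepsilon_d$), and case (3) is the delicate one — one must correctly single out the combination $\tfrac{f+\overline f}{2}+\tfrac{f-\overline f}{2i}$ of the conjugate newforms $f,\overline f$ spanning the two-dimensional space $S_2(\Gamma_0(64),\legendre{2}{\cdot})$, and deduce that $f$ itself (not merely $\Psi_2(\chi_{-4},\tau)$) has CM by $\Q(\sqrt{-2})$, using that $f\otimes\legendre{-2}{\cdot}$ is again a newform of level $64$ with character $\legendre{2}{\cdot}$ and hence equals $f$ or $\overline f$, the latter being incompatible with the explicit coefficients of $\Psi_2(\chi_{-4},\tau)$.
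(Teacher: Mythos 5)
Your proposal follows essentially the same route as the paper: factor $\Psi_k(\chi,\tau)=\theta(\chi,1,\tau)\cdot(\theta_0\vert V_k)(\tau)$, use the Shimura/Ono transformation laws for the unary theta series to place the product in the stated weight-$2$ space, and then identify it with the LMFDB newform via the Sturm bound (\Cref{thmSturm}) and \Cref{lemmaInd}. Your extras --- the dimension count for $S_2(\Gamma_0(36))$ and the derivation of CM from the vanishing of $a_p$ at inert primes --- are sound optional refinements of steps the paper simply reads off from the LMFDB.
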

\begin{proof}
If $ \chi $ is an odd character of conductor $ N_{\chi} $, then by \cite[Theorem 1.44]{On}, $ \theta(\chi,1,\tau) \in S_{\frac{3}{2}}(\Gamma_0(4N_{\chi}^2),\chi\chi_{-4}) $. By \cite[Proposition 1.41]{On}, $ \theta_0(\tau) \in M_{\frac{1}{2}}(\Gamma_0(4)) $, hence $ \left(\theta_0\vert V_k\right)(\tau) \in M_{\frac{1}{2}}(\Gamma_0(4k),\legendre{k}{\cdot}) $.

Thus, if $ k = 3 $ and $ \chi = \chi_{-3} $, then $ \theta(\chi_{-3},1,\tau) \in S_{\frac{3}{2}}(\Gamma_0(36),\legendre{12}{\cdot}) $ and $ \left(\theta_0\vert V_3\right)(\tau) \in M_{\frac{1}{2}}(\Gamma_0(12),\legendre{3}{\cdot}) $, hence $ \Psi_3(\chi_{-3},\tau) \in S_2(\Gamma_0(36)) $.

If $ \chi = \chi_{-4} $, then $ \theta(\chi_{-4},1,\tau) \in S_{\frac{3}{2}}(\Gamma_0(64),\chi_{-4}^2) = S_{\frac{3}{2}}(\Gamma_0(64)) $. For $ k = 4 $, we get $ \left(\theta_0\vert V_4\right)(\tau) \in M_{\frac{1}{2}}(\Gamma_0(16)) $, hence $ \Psi_4(\chi_{-4},\tau) \in S_2(\Gamma_0(64)) $.

For $ k = 2 $, we get $ \left(\theta_0\vert V_2\right)(\tau) \in M_{\frac{1}{2}}(\Gamma_0(8),\legendre{2}{\cdot}) $, hence $ \Psi_2(\chi_{-4},\tau) \in S_2(\Gamma_0(64),\legendre{2}{\cdot}) $.

Next, we identify these forms with forms in the LMFDB \cite{LMFDB}. We have $ [\SL_2(\Z):\Gamma_0(36)] = 72 $ and $ [\SL_2(\Z):\Gamma_0(64)] = 96 $ by \Cref{lemmaInd}. To show that two forms on $ \Gamma_0(36) $ (respectively $ \Gamma_0(64) $) are equal by \Cref{thmSturm}, we must check the first $ \frac{72}{6} = 12 $ (respectively $ \frac{96}{6} = 16 $) coefficients. Since
\begin{align*}
	\Psi_3(\chi_{-3},\tau)& = q-4q^7+O\left(q^{12}\right),\\
	\Psi_4(\chi_{-4},\tau)& = q+2q^5-3q^9-6q^{13}+O\left(q^{16}\right),\\
	\Psi_2(\chi_{-4},\tau)& = q+2q^3-q^9-6q^{11}+O\left(q^{16}\right),
\end{align*}
$ \Psi_3(\chi_{-3},\tau) $ is equal to the form with LMFDB label 36.2.a.a. and $ \Psi_4(\chi_{-4},\tau) $ to the form with LMFDB label 64.2.a.a. These are newforms with complex multiplication by $ \Q(\sqrt{-3}) $ and $ \Q(i) $, respectively. Finally, the form $ f $ with LMFDB label 64.2.b.a has $ q $-expansion
\[
	f(\tau) = q+2iq^3-q^9-6iq^{11}+O\left(q^{16}\right),
\]
hence
\[
	\overline{f}(\tau) = q-2iq^3-q^9+6iq^{11}+O\left(q^{16}\right).
\]
The formula for $ \Psi_2(\chi_{-4},\tau) $ follows.
\end{proof}

The function
\[
	D(\tau) := \sum_{n = 1}^\infty \sigma(n)q^n,
\]
where $ \sigma(n) = \sum_{d \mid n}d $, is an example of a quasimodular form. Let
\[
	E_2(\tau) := 1-24\sum_{n=1}^\infty \sigma(n)q^n
\]
be the Eisenstein series of weight $ 2 $. If $ \tau = u+iv $, then
\[
	\widehat{E_2}(\tau) := E_2(\tau)-\frac{3}{\pi v}
\]
satisfies weight $ 2 $ modularity on $ \SL_2(\Z) $ \cite[p. 113]{Ko}. We have $ D(\tau) = \frac{1}{24}-\frac{1}{24}E_2(\tau) $, hence
\[
	\widehat{D}(\tau) := D(\tau)-\frac{1}{24}+\frac{1}{8\pi v} = -\frac{1}{24}\widehat{E_2}(\tau)
\]
also satisfies weight $ 2 $ modularity on $ \SL_2(\Z) $.

\begin{lemma}
\label{lemmaD}
If $ M \in \Z_{\geq 2} $ and $ m \in \Z $ is such that $ M \nmid m $, then $ D\vert S_{M,m} \in M_2(\Gamma_0(M^2)) $. In particular, if $ \chi_{M,0} $ denotes the principal character modulo $ M $, then $ D \otimes \chi_{M,0} \in M_2(\Gamma_0(M^2)) $.
\end{lemma}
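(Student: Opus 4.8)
The plan is to reduce to the completed form $\widehat{D}$, for which weight $2$ modularity on $\SL_2(\Z)$ has already been recorded, and to exploit the fact that sieving by $S_{M,m}$ with $M\nmid m$ kills exactly the two ``non-modular'' pieces of $\widehat{D}=D-\tfrac{1}{24}+\tfrac{1}{8\pi v}$, namely the constant and the non-holomorphic term.

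\emph{Step 1.} I would first show that $\widehat{D}\vert S_{M,m}=D\vert S_{M,m}$. Use the averaging formula $(F\vert S_{M,m})(\tau)=\tfrac{1}{M}\sum_{\ell\pmod M}e^{-2\pi i m\ell/M}F\!\left(\tau+\tfrac{\ell}{M}\right)$ from the proof of \Cref{lemmaS}, which is meaningful for an arbitrary $F\colon\mathbb{H}\to\C$ and agrees with the usual definition on $q$-series. Both the constant $\tfrac{1}{24}$ and the function $v\mapsto\tfrac{1}{8\pi v}$ are invariant under $\tau\mapsto\tau+\tfrac{\ell}{M}$, and $\sum_{\ell\pmod M}e^{-2\pi i m\ell/M}=0$ because $M\nmid m$; applying $S_{M,m}$ to $\widehat{D}=D-\tfrac{1}{24}+\tfrac{1}{8\pi v}$ therefore gives $\widehat{D}\vert S_{M,m}=D\vert S_{M,m}$. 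In particular this common function is holomorphic on $\mathbb{H}$: it is the $q$-series $\sum_{n\equiv m\,(M)}\sigma(n)q^n$.

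\emph{Step 2.} Next I would transfer the modularity. Since $\widehat{D}$ satisfies weight $2$ modularity on $\SL_2(\Z)=\Gamma_0(1)\cap\Gamma_1(1)$ with trivial character, and the computation in the proof of \Cref{lemmaS} uses only the modularity of its input -- never holomorphy or the form of a $q$-expansion -- I may run that computation with $\widehat{D}$ in place of $f$ and $N_1=N_2=N_\chi=1$. It yields, for $\gamma=\begin{pmatrix}a&b\\c&d\end{pmatrix}\in\Gamma_0(\lcm(1,M^2,M,M))=\Gamma_0(M^2)$, the identity $(\widehat{D}\vert S_{M,m})\vert_2\gamma=\widehat{D}\vert S_{M,ma^2}$. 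As $M^2\mid c$ forces $\gcd(a,M)=1$, we have $M\nmid ma^2$, so Step 1 rewrites the right-hand side as $D\vert S_{M,ma^2}$. Imposing additionally $\gamma\in\Gamma_1(M)$ (so $a\equiv1\pmod M$ and $ma^2\equiv m$) shows that $D\vert S_{M,m}$ satisfies weight $2$ modularity on $\Gamma_0(M^2)\cap\Gamma_1(M)$; and, just as in the ``moreover'' clause of \Cref{lemmaS}, when $M\mid 24$ -- which covers $M\in\{2,3,4,6,8\}$, hence every case needed in \Cref{secExp} -- one has $a^2\equiv1\pmod M$ for all $a$ coprime to $M$, so $D\vert S_{M,ma^2}=D\vert S_{M,m}$ and the modularity holds on the full $\Gamma_0(M^2)$. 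Holomorphy at the cusps is standard, since $D\vert S_{M,m}$ is a $q$-series with coefficients $O(n^{1+\varepsilon})$ obtained by sieving the almost holomorphic modular form $\widehat{D}$, an operation that stays within forms of moderate growth at every cusp. Hence $D\vert S_{M,m}\in M_2(\Gamma_0(M^2))$. The ``in particular'' follows at once: for $M\ge2$ every residue $m'$ coprime to $M$ satisfies $M\nmid m'$, so $D\otimes\chi_{M,0}=\sum_{\gcd(m',M)=1}D\vert S_{M,m'}$ is a finite sum of forms in $M_2(\Gamma_0(M^2))$.

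The only real content lies in Steps 1--2: the bookkeeping showing that sieving with $M\nmid m$ deletes precisely the constant and the $v^{-1}$-term of $\widehat{D}$, and the observation that the proof of \Cref{lemmaS} applies verbatim to $\widehat{D}$ although $\widehat{D}$ is not a holomorphic $q$-series. I do not anticipate any genuine obstacle beyond this; the reduction to the full group $\Gamma_0(M^2)$ when $M\mid 24$ and the growth estimate at the cusps are both standard.
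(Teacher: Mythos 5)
Your proof is correct, but it reaches the conclusion by a different mechanism than the paper. The paper never sieves the non-holomorphic completion: it forms the combination $E_2(\tau)-ME_2(M\tau)=\widehat{E_2}(\tau)-M\widehat{E_2}(M\tau)$, in which the two $\frac{3}{\pi v}$ terms cancel, so that $E_2-ME_2\vert V_M$ is an honest element of $M_2(\Gamma_0(M))$; since $E_2\vert V_M$ is supported on exponents divisible by $M$ and $M\nmid m$, one has $E_2\vert S_{M,m}=(E_2-ME_2\vert V_M)\vert S_{M,m}$, and \Cref{lemmaS} then applies exactly as stated, to a holomorphic form. You instead apply $S_{M,m}$ directly to $\widehat{D}$ and observe that the constant and the $v^{-1}$ term are annihilated by the vanishing character sum $\sum_{\ell}e^{-2\pi i m\ell/M}$; the price is that you must re-run the proof of \Cref{lemmaS} with a non-holomorphic input, which you correctly justify by noting that the computation there uses only modularity. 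Both routes are sound: the paper's buys a clean black-box appeal to \Cref{lemmaS}, while yours avoids introducing the auxiliary form $E_2-ME_2\vert V_M$ and makes transparent why the sieved series is holomorphic. One point in your favour is that you explicitly record that in general one only obtains modularity on $\Gamma_0(M^2)\cap\Gamma_1(M)$, with the full group $\Gamma_0(M^2)$ recovered when $M\mid 24$. This caveat is genuinely needed --- for instance $(D\vert S_{5,1})\vert_2\gamma=D\vert S_{5,4}\neq D\vert S_{5,1}$ for $\gamma\in\Gamma_0(25)$ with $a\equiv 2\pmod 5$ --- so the lemma as stated is slightly too strong for general $M$, although $M\mid 24$ in every application in \Cref{secExp}, and the ``in particular'' claim about $D\otimes\chi_{M,0}$ does hold for all $M$ since summing over all $m'$ coprime to $M$ restores invariance under $m'\mapsto m'a^2$.
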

\begin{proof}
We have
\[
	E_2(\tau)-ME_2(M\tau) = \widehat{E_2}(\tau)-M\widehat{E_2}(M\tau).
\]
The left-hand side is holomorphic and the ring-hand side satisfies weight $ 2 $ modularity on $ \Gamma_0(M) $, hence $ E_2-ME_2\vert V_M $ is a holomorphic modular form and belongs to $ M_2(\Gamma_0(M)) $. Since $ M \nmid m $, we get
\[
	E_2\vert S_{M,m} = \left(E_2-ME_2\vert V_M\right)\vert S_{M,m} \in M_2(\Gamma_0(M^2))
\]
by \Cref{lemmaS}, hence also $ D\vert S_{M,m} = -\frac{1}{24}E_2 \vert S_{M,m} \in M_2(\Gamma_0(M^2)) $.
\end{proof}

It is enough to determine $ H_{m,M}(n) $ for $ 0 \leq m \leq \frac{M}{2} $ since, by definition,
\[
	H_{m,M}(n) = \sum_{\substack{t \in \Z\\t\equiv m \pmod{M}}}H(4n-t^2) = H_{-m,M}(n).
\]

\begin{proposition}
\label{thmMod6}
If $ \chi_{6,0} $ denotes the principal character modulo $ 6 $, then
\begin{align*}
	\left(\mathcal{H}\theta_{0,6}\right)\vert U_4\otimes \chi_{6,0}+2G_{1,1,3}\vert S_{6,5}& = \frac{1}{3}D\vert S_{6,1}+\frac{2}{3}D\vert S_{6,5}+\frac{1}{6}\Psi_3(\chi_{-3},\cdot),\\
	\left(\H\theta_{1,6}\right)\vert U_4\otimes\chi_{6,0}& = \frac{1}{4}D\vert S_{6,1}+\frac{1}{6}D\vert S_{6,5}+\frac{1}{12}\Psi_3(\chi_{-3},\cdot),\\
	\left(\H\theta_{2,6}\right)\vert U_4\otimes\chi_{6,0}+G_{1,1,3}\vert S_{6,1}+\frac{1}{2}T_{1,1,6}& = \frac{1}{2}D\vert S_{6,1}+\frac{1}{3}D\vert S_{6,5}-\frac{1}{12}\Psi_3(\chi_{-3},\cdot),\\
	\left(\H\theta_{3,6}\right)\vert U_4\otimes\chi_{6,0}& = \frac{1}{6}D\vert S_{6,1}+\frac{1}{3}D\vert S_{6,5}-\frac{1}{6}\Psi_3(\chi_{-3},\cdot).
\end{align*}
\end{proposition}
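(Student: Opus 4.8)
The plan is to reduce each of the four identities to an instance of \Cref{lemmaHLambda} with $k=0$, twisted by the principal character $\chi_{6,0}$, and to use the fact that this twist turns the resulting quasimodular form into a genuine holomorphic modular form; the identities then follow from a finite coefficient comparison via the Sturm bound, since the right-hand sides are modular forms on $\Gamma_0(36)$.

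First I would apply \Cref{lemmaLambda3} with $\ell=1$ to rewrite the left-hand side of the identity for $m\in\{0,1,2,3\}$ as $\bigl(\H\theta_{m,6}+\frac{1}{2}\Lambda_{1,m,6}\bigr)\vert U_4\otimes\chi_{6,0}$: for $m\equiv 0$ this uses $\frac{1}{2}\Lambda_{1,0,6}\vert U_4\otimes\chi_{6,0}=2\,G_{1,1,3}\vert S_{6,5}$, for $m$ odd that $\Lambda_{1,m,6}\vert U_4\otimes\chi_{6,0}=0$, and for $m\equiv 2$ that $\frac{1}{2}\Lambda_{1,2,6}\vert U_4\otimes\chi_{6,0}=G_{1,1,3}\vert S_{6,1}+\frac{1}{2}T_{1,1,6}$. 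By \Cref{lemmaHLambda} with $k=0$, the function $F_m:=\bigl(\H\theta_{m,6}+\frac{1}{2}\Lambda_{1,m,6}\bigr)\vert U_4$ is a quasimodular form of weight $2$ with trivial character on $\Gamma_0(144)$ if $m=0$ and on $\Gamma_0(144)\cap\Gamma_1(6)$ if $m\in\{1,2,3\}$. Since a quasimodular form of weight $2$ has depth at most $1$, I can write $F_m=g_m+\alpha_m E_2$ for some holomorphic modular form $g_m$ of weight $2$ on that group (with trivial character) and some $\alpha_m\in\C$.

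Next I would twist by $\chi_{6,0}$. The form $g_m\otimes\chi_{6,0}$ is again a holomorphic modular form of weight $2$ on $\Gamma_0(144)$ (resp.\ $\Gamma_0(144)\cap\Gamma_1(6)$) by \Cref{lemmaS}, applied with $M=6\mid 24$. Writing $E_2=1-24\sum_{n\geq 1}\sigma(n)q^n$, we obtain $E_2\otimes\chi_{6,0}=-24\,D\otimes\chi_{6,0}=-24\bigl(D\vert S_{6,1}+D\vert S_{6,5}\bigr)$, which lies in $M_2(\Gamma_0(36))$ by \Cref{lemmaD}. Hence, irrespective of the value of $\alpha_m$, the twisted left-hand side $F_m\otimes\chi_{6,0}=g_m\otimes\chi_{6,0}+\alpha_m\bigl(E_2\otimes\chi_{6,0}\bigr)$ is a genuine holomorphic modular form of weight $2$ with trivial character on $\Gamma_0(144)$ (resp.\ $\Gamma_0(144)\cap\Gamma_1(6)$). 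On the other side, $D\vert S_{6,1},\,D\vert S_{6,5}\in M_2(\Gamma_0(36))$ by \Cref{lemmaD} and $\Psi_3(\chi_{-3},\cdot)\in S_2(\Gamma_0(36))$ by \Cref{lemmaCM}, so the right-hand side of each identity lies in $M_2(\Gamma_0(36))$. Since $\Gamma_0(144)\subseteq\Gamma_0(36)$, both sides of every identity lie in $M_2(\Gamma_0(144)\cap\Gamma_1(6))$ (and in $M_2(\Gamma_0(144))$ when $m=0$), all with trivial character.

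Finally I would invoke the Sturm bound, \Cref{thmSturm}. By \Cref{lemmaInd}, $[\SL_2(\Z):\Gamma_0(144)\cap\Gamma_1(6)]=576$ and $[\SL_2(\Z):\Gamma_0(144)]=288$, so it suffices to verify that the two sides of each identity have the same $q$-expansion through $q^{96}$ (through $q^{48}$ when $m=0$); this is a direct finite computation from the defining series of $\H$, $\theta_{m,6}$, $G_{1,1,3}$, $T_{1,1,6}$, $D$, and $\Psi_3(\chi_{-3},\cdot)$. The only step that requires an idea rather than bookkeeping is the passage from quasimodular to modular after the twist, i.e.\ that the $E_2$-component is harmless; but once one observes that $E_2\otimes\chi_{6,0}=-24\,D\otimes\chi_{6,0}$ and applies \Cref{lemmaD}, what remains is level tracking and a routine coefficient check.
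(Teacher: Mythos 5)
Your proposal is correct and follows essentially the same route as the paper: reduce via \Cref{lemmaLambda3} and \Cref{lemmaHLambda} with $k=0$, twist by $\chi_{6,0}$, place both sides in $M_2(\Gamma_0(144))$ (resp.\ $M_2(\Gamma_0(144)\cap\Gamma_1(6))$), and finish with a Sturm-bound coefficient check, with the correct indices $288$ and $576$. Your explicit decomposition $F_m=g_m+\alpha_m E_2$ together with the observation $E_2\otimes\chi_{6,0}=-24\,D\otimes\chi_{6,0}$ is in fact a more careful justification of the quasimodular-to-modular passage than the paper's brief appeal to the ``moreover'' part of \Cref{lemmaS}, but it is the same argument in substance.
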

\begin{proof}
We prove in detail the first formula. By \Cref{lemmaHLambda}, the function
\[
	\left(\H\theta_{0,6}+\frac{1}{2}\Lambda_{1,0,6}\right)\bigg\vert U_4
\]
is a quasimodular form of weight $ 2 $ on $ \Gamma_0(4\cdot 6^2) $. Since $ 6 $ divides $ 24 $,
\[
	\left(\H\theta_{0,6}+\frac{1}{2}\Lambda_{1,0,6}\right)\bigg\vert U_4\otimes\chi_{6,0}
\]
also satisfies weight $ 2 $ modularity on $ \Gamma_0(4\cdot 6^2) $ by the ``moreover" part of \Cref{lemmaS}. By \Cref{lemmaLambda3},
\[
	\Lambda_{1,0,6}\vert U_4 \otimes \chi_{6,0} = 4G_{1,1,3}\vert S_{6,5},
\]
hence
\[
	(\H\theta_{0,6})\vert U_4\otimes\chi_{6,0}+2G_{1,1,3}\vert S_{6,5} \in M_2(\Gamma_0(4 \cdot 36)).
\]
By \Cref{lemmaD}, $ D\vert S_{6,r} \in M_2(\Gamma_0(36)) $ for $ r \in \{1,5\} $, and by \Cref{lemmaCM} (1), $ \Psi_3(\chi_{-3},\tau) \in S_2(\Gamma_0(36)) $. Thus, both the left-hand side and the right-hand side are modular forms in $ M_2(\Gamma_0(4\cdot 36)) $.

If $ n \equiv 1 \pmod{6} $, then the coefficient of $ q^n $ on the left-hand side equals $ H_{0,6}(n) $, while the coefficient of $ q^n $ on the right-hand side equals $ \frac{1}{3}\sigma(n)+\frac{1}{12}\psi_3(\chi_{-3},n) $.

If $ n \equiv 5 \pmod{6} $, then the coefficient of $ q^n $ on the left-hand side equals $ H_{0,6}(n)+2g_{1,0,6}(n) $, while the coefficient of $ q^n $ on the right-hand side equals $ \frac{2}{3}\sigma(n)+\frac{1}{12}\psi_3(\chi_{-3},n) $.

We have $ [\SL_2(\Z):\Gamma_0(4\cdot 36)] = 288 $ by \Cref{lemmaInd}, hence we have to compare the first $ \frac{288}{6} = 48 $ coefficients to conclude that the two modular forms are equal by \Cref{thmSturm}. The proof of the remaining formulas is similar.
\end{proof}

We checked that the coefficients of the forms on both sides are equal in SageMath \cite{Sa}. The code is available at \url{https://github.com/zinmik/Hurwitz-class-numbers}.

Now we prove the explicit formulas for $ H_{m,6}(p) $.

\begin{theorem}
\label{thmH6}
Let $ p \geq 5 $ be a prime. For $ p \equiv 1 \pmod{3} $, let $ p = x^2+3y^2 $, where $ x, y \in \Z $. If $ \chi_{-3} $ denotes the non-principal character modulo $ 3 $, then
\[
	H_{m,6}(p) = 
	\setlength{\arraycolsep}{0pt}
	\renewcommand{\arraystretch}{1.2}
		\left\{\begin{array}{l @{\quad} l @{\quad} l}
		\frac{p+1}{3}+\frac{1}{3}\chi_{-3}(x)x&\text{if }m \equiv 0 \pmod{6},&p \equiv 1 \pmod{3},\\
		\frac{2p-4}{3}&\text{if }m \equiv 0 \pmod{6},&p \equiv 2 \pmod{3},\\
		\frac{p+1}{4}+\frac{1}{6}\chi_{-3}(x)x&\text{if }m \equiv 1, 5 \pmod{6},&p \equiv 1 \pmod{3},\\
		\frac{p+1}{6}&\text{if }m \equiv 1, 5 \pmod{6},&p \equiv 2 \pmod{3},\\
		\frac{p-1}{2}-\frac{1}{6}\chi_{-3}(x)x&\text{if }m \equiv 2, 4 \pmod{6},&p \equiv 1 \pmod{3},\\
		\frac{p+1}{3}&\text{if }m \equiv 2,3,4 \pmod{6},&p \equiv 2 \pmod{3},\\
		\frac{p+1}{6}-\frac{1}{3}\chi_{-3}(x)x&\text{if }m \equiv 3 \pmod{6},&p \equiv 1 \pmod{3}.
	\end{array}\right.
\]
\end{theorem}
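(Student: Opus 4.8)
The plan is to read off the coefficient of $q^p$ from each of the four identities in \Cref{thmMod6}. Since these identities are equalities of modular forms in $M_2(\Gamma_0(144))$, comparing $q^p$-coefficients on both sides is legitimate, and for $p\geq 5$ prime it will produce, after a short computation, a linear equation that we solve for $H_{m,6}(p)$.

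First I would assemble the elementary coefficient data for a prime $p\geq 5$. Because $\gcd(p,6)=1$ we have $\chi_{6,0}(p)=1$, so the coefficient of $q^p$ in $(\H\theta_{m,6})\vert U_4\otimes\chi_{6,0}$ is exactly $H_{m,6}(p)$ (using $(\H\theta_{m,6})\vert U_4=\H_{m,6}$). Also, among odd primes $p\geq 5$ one has $p\equiv 1\pmod 6$ iff $p\equiv 1\pmod 3$, and $p\equiv 5\pmod 6$ iff $p\equiv 2\pmod 3$. The coefficient of $q^p$ in $D\vert S_{6,r}$ is $\sigma(p)=p+1$ when $p\equiv r\pmod 6$ and $0$ otherwise. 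For $G_{1,1,3}\vert S_{6,r}$, the only divisor of $p$ below $\sqrt p$ is $d=1$, and $1\equiv 1\pmod 3$ (while $1\not\equiv-1\pmod 3$), so $g_{1,1,3}(p)=1$; hence the coefficient of $q^p$ in $G_{1,1,3}\vert S_{6,r}$ is $1$ when $p\equiv r\pmod 6$ and $0$ otherwise. The coefficient of $q^p$ in $T_{1,1,6}$ vanishes since its $q$-expansion is supported on squares and $p$ is not a perfect square.

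The one genuinely arithmetic input is the coefficient of $q^p$ in $\Psi_3(\chi_{-3},\cdot)$, namely $\tfrac12\psi_3(\chi_{-3},p)=\tfrac12\sum_{x^2+3y^2=p}\chi_{-3}(x)x$. If $p\equiv 2\pmod 3$ there is no representation $p=x^2+3y^2$ (as $x^2+3y^2\equiv 0,1\pmod 3$), so this coefficient is $0$. If $p\equiv 1\pmod 3$, then $p=x^2+3y^2$ has a solution with $x,y>0$, unique up to signs by \cite[Exercise 3.23]{Co}, and necessarily $x,y\neq 0$ with $3\nmid x$; since the non-principal character $\chi_{-3}$ is odd, the summand $\chi_{-3}(x)x$ is unchanged under $x\mapsto -x$, so the four solutions $(\pm x,\pm y)$ together contribute $4\chi_{-3}(x)x$ and the coefficient of $q^p$ in $\Psi_3(\chi_{-3},\cdot)$ is $2\chi_{-3}(x)x$.

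With this data, I would substitute into the four displayed identities of \Cref{thmMod6} one at a time, splitting each into the subcases $p\equiv 1\pmod 3$ and $p\equiv 2\pmod 3$, and solve for $H_{m,6}(p)$: the $\theta_{0,6}$ identity gives the $m\equiv 0\pmod 6$ formulas, the $\theta_{1,6}$ identity gives $m\equiv 1,5\pmod 6$ (via $H_{m,6}=H_{-m,6}$), the $\theta_{2,6}$ identity gives $m\equiv 2,4\pmod 6$, and the $\theta_{3,6}$ identity gives $m\equiv 3\pmod 6$; the combined $m\equiv 2,3,4\pmod 6$ line for $p\equiv 2\pmod 3$ then follows by observing both computations yield $\tfrac{p+1}{3}$. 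The step I expect to require the most care is the bookkeeping of the correction terms $G_{1,1,3}\vert S_{6,1}$, $G_{1,1,3}\vert S_{6,5}$, and $\tfrac12 T_{1,1,6}$ appearing on the left-hand sides — in particular tracking which residue class modulo $6$ each is supported on, hence whether it contributes to the $q^p$-coefficient — together with the sign-and-multiplicity count for $\psi_3(\chi_{-3},p)$; once these are settled the remaining arithmetic is routine.
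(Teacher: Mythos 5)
Your proposal is correct and follows exactly the paper's route: extract the coefficient of $q^p$ from each identity in \Cref{thmMod6}, using that the correction terms $G_{1,1,3}\vert S_{6,r}$ contribute $g_{1,1,3}(p)=1$ only in the matching residue class, that $T_{1,1,6}$ is supported on squares, and that $\psi_3(\chi_{-3},p)=4\chi_{-3}(x)x$ for $p\equiv 1\pmod 3$. Your bookkeeping reproduces all seven cases of the theorem, so nothing is missing.
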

\begin{proof}
We prove the formula in the case $ m \equiv 0 \pmod{6} $. It follows from \Cref{thmMod6} that if $ p \equiv 1 \pmod{6} $, then
\[
	H_{0,6}(p) = \frac{1}{3}\sigma(p)+\frac{1}{12}\psi_3(\chi_{-3},p) = \frac{p+1}{3}+\frac{1}{3}\chi_{-3}(x)x
\]
and if $ p \equiv 5 \pmod{6} $, then
\[
	H_{0,6}(p)+2 = \frac{2}{3}\sigma(p) = \frac{2p+2}{3}.
\]
The other cases follow from the other formulas in \Cref{thmMod6}.
\end{proof}

\begin{proposition}
\label{thmMod8}
If $ \chi_{8,0} $ denotes the principal character modulo $ 8 $, then
\begin{align*}
	(\mathcal{H}\theta_{0,8})\vert U_4\otimes\chi_{8,0}+2G_{1,1,4}\vert S_{8,7}& = \frac{1}{4}D\vert S_{4,1}+\frac{1}{3}D\vert S_{8,3}+\frac{1}{2}D\vert S_{8,7}+\frac{1}{4}\Psi_4(\chi_{-4},\cdot),\\
	\left(\H\theta_{1,8}\right)\vert U_4 \otimes \chi_{8,0}& = \frac{1}{6}D\otimes\chi_{8,0}+\frac{1}{6}\Psi_2(\chi_{-4},\cdot),\\
	\left(\H\theta_{2,8}\right)\vert U_4 \otimes \chi_{8,0}+G_{1,1,4}\vert S_{4,1}+\frac{1}{2}T_{1,1,4}& = \frac{5}{12}D\vert S_{4,1}+\frac{1}{4}D\vert S_{4,3},\\
	\left(\H\theta_{3,8}\right)\vert U_4 \otimes \chi_{8,0}& = \frac{1}{6}D\otimes\chi_{8,0}-\frac{1}{6}\Psi_2(\chi_{-4},\cdot),\\
	\left(\H\theta_{4,8}\right)\vert U_4 \otimes \chi_{8,0}+2G_{1,1,4}\vert S_{8,3}& = \frac{1}{4}D\vert S_{4,1}+\frac{1}{2}D\vert S_{8,3}+\frac{1}{3}D \vert S_{8,7}-\frac{1}{4}\Psi_4(\chi_{-4},\cdot).
\end{align*}
\end{proposition}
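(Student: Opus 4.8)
The plan is to prove each of the five identities the same way the first identity of \Cref{thmMod6} is proved; I will spell out the case $m=0$ and indicate the (minor) changes for $m=1,2,3,4$. First I would recognize the extra terms written on the left of the five identities — namely $2G_{1,1,4}\vert S_{8,7}$, $0$, $G_{1,1,4}\vert S_{4,1}+\frac{1}{2}T_{1,1,4}$, $0$, and $2G_{1,1,4}\vert S_{8,3}$ — as exactly $\frac{1}{2}\Lambda_{1,m,8}\vert U_4\otimes\chi_{8,0}$ by \Cref{lemmaLambda4}, so that the left-hand side of each identity equals $\bigl(\H\theta_{m,8}+\frac{1}{2}\Lambda_{1,m,8}\bigr)\bigr\vert U_4\otimes\chi_{8,0}$. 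By \Cref{lemmaHLambda} with $k=0$, the function $\bigl(\H\theta_{m,8}+\frac{1}{2}\Lambda_{1,m,8}\bigr)\bigr\vert U_4$ is a quasimodular form of weight $2$ on $\Gamma_0(256)$ if $8\mid m$ and on $\Gamma_0(256)\cap\Gamma_1(8)$ if $8\nmid m$.

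Next I would show, as in the proof of \Cref{thmMod6}, that the $\chi_{8,0}$-twist of this quasimodular form is an honest holomorphic modular form on the same group. A weight $2$ quasimodular form on $\Gamma_0(256)$ (resp.\ $\Gamma_0(256)\cap\Gamma_1(8)$) is a constant multiple of $E_2$ plus a holomorphic modular form in that space, and since $E_2\otimes\chi_{8,0}=-24\,(D\otimes\chi_{8,0})\in M_2(\Gamma_0(64))$ by \Cref{lemmaD} and $8\mid 24$, the ``moreover'' part of \Cref{lemmaS} gives $\bigl(\H\theta_{m,8}+\frac{1}{2}\Lambda_{1,m,8}\bigr)\bigr\vert U_4\otimes\chi_{8,0}\in M_2(\Gamma_0(256))$ if $8\mid m$ and $\in M_2(\Gamma_0(256)\cap\Gamma_1(8))$ otherwise. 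For the right-hand sides, \Cref{lemmaD} gives $D\vert S_{4,r}\in M_2(\Gamma_0(16))$ and $D\vert S_{8,r},\,D\otimes\chi_{8,0}\in M_2(\Gamma_0(64))$, while \Cref{lemmaCM} gives $\Psi_4(\chi_{-4},\cdot)\in S_2(\Gamma_0(64))$ and $\Psi_2(\chi_{-4},\cdot)\in S_2(\Gamma_0(64),\legendre{2}{\cdot})\subseteq M_2(\Gamma_0(64)\cap\Gamma_1(8))$; hence in each case both sides lie in a common space, namely $M_2(\Gamma_0(256))$ for $m=0$ and $M_2(\Gamma_0(256)\cap\Gamma_1(8))$ for $m=1,2,3,4$, in both cases with trivial character.

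It then remains to check equality of Fourier coefficients up to the Sturm bound. By \Cref{lemmaInd}, $[\SL_2(\Z):\Gamma_0(256)]=384$ and $[\SL_2(\Z):\Gamma_0(256)\cap\Gamma_1(8)]=1536$, so \Cref{thmSturm} reduces the problem to comparing the first $64$ coefficients for $m=0$ and the first $256$ coefficients for $m=1,2,3,4$. For $n$ even both sides vanish; for $n$ odd the coefficient of $q^n$ on the left is $H_{m,8}(n)$ plus an explicit contribution from the $G$- and $T$-terms that depends only on $n\bmod 8$, while on the right it is a rational combination of $\sigma(n)$ and $\frac{1}{2}\psi_k(\chi_{-4},n)$ that again depends only on $n\bmod 8$. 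This finite verification I would carry out by computer in SageMath, exactly as for \Cref{thmMod6}.

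The step I expect to be the main obstacle is keeping the levels and characters straight rather than any single hard computation: \Cref{lemmaHLambda} forces the factor $\Gamma_1(8)$ because $M=8$ is composite, the CM form $\Psi_2(\chi_{-4},\cdot)$ carries the non-principal nebentypus $\legendre{2}{\cdot}$, and \Cref{thmSturm} needs both sides in one space $M_\kappa(\Gamma,\chi)$; one must check these are compatible (they are, on $\Gamma_0(256)\cap\Gamma_1(8)$ with trivial character) and that twisting the a priori only quasimodular function $\bigl(\H\theta_{m,8}+\frac{1}{2}\Lambda_{1,m,8}\bigr)\bigr\vert U_4$ by $\chi_{8,0}$ genuinely produces a holomorphic form. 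Once the ambient space and its index are pinned down, the case $m=2$ goes through in the same way with no $\Psi$-term, and $m=3$ and $m=4$ are identical to $m=1$ (up to the sign of $\Psi_2(\chi_{-4},\cdot)$) and to $m=0$, respectively.
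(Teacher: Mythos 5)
Your proposal is correct and follows essentially the same route as the paper: identify the extra left-hand terms with $\tfrac{1}{2}\Lambda_{1,m,8}\vert U_4\otimes\chi_{8,0}$ via \Cref{lemmaLambda4}, place both sides in $M_2(\Gamma_0(256))$ (for $8\mid m$) or $M_2(\Gamma_0(256)\cap\Gamma_1(8))$ (otherwise) using \Cref{lemmaHLambda}, \Cref{lemmaS}, \Cref{lemmaD}, and \Cref{lemmaCM} — including the observation that the nebentypus $\legendre{2}{\cdot}$ of $\Psi_2(\chi_{-4},\cdot)$ trivializes on $\Gamma_1(8)$ — and then compare coefficients up to the Sturm bound computed from \Cref{lemmaInd}. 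Your explicit $E_2$-decomposition argument for why the $\chi_{8,0}$-twist of the quasimodular form is genuinely holomorphic modular is a welcome elaboration of a step the paper treats tersely, and your index and Sturm-bound computations ($384$ and $64$, resp.\ $1536$ and $256$) are correct.
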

\begin{proof}
Let us prove the first formula. By \Cref{lemmaHLambda}, the function
\[
	\left(\H\theta_{0,8}+\frac{1}{2}\Lambda_{1,0,8}\right)\bigg\vert U_4
\]
is a quasimodular form of weight $ 2 $ on $ \Gamma_0(4\cdot 8^2) $. Since $ 8 $ divides $ 24 $,
\[
	\left(\H\theta_{0,8}+\frac{1}{2}\Lambda_{1,0,8}\right)\bigg\vert U_4\otimes\chi_{8,0}
\]
satisfies weight $ 2 $ modularity on $ \Gamma_0(4\cdot 64) $ by the ``moreover" part of \Cref{lemmaS}. By \Cref{lemmaLambda4},
\[
	\Lambda_{1,0,8}\vert U_4\otimes\chi_{8,0} = 4G_{1,1,4}\vert S_{8,7},
\]
hence
\[
	(\H\theta_{0,8})\vert U_4\otimes\chi_{8,0}+2G_{1,1,4}\vert S_{8,7} \in M_2(\Gamma_0(4\cdot 64)).
\]
By \Cref{lemmaD}, $ D\vert S_{4,1} \in M_2(\Gamma_0(16)) $ and $ D\vert S_{8,r} \in M_2(\Gamma_0(64)) $ for $ r \in \{3,7\} $. By \Cref{lemmaCM} (2), $ \Psi_4(\chi_{-4},\tau) \in S_2(\Gamma_0(64)) $. Thus, both the left-hand side and the right-hand side are modular forms in $ M_2(\Gamma_0(4\cdot 64)) $.

The coefficient of $ q^n $ on the left-hand side equals
\[
	H_{0,8}(n)+\begin{cases}
		0&\text{if }n \equiv 1,3,5 \pmod{8},\\
		2g_{1,1,4}(n)&\text{if }n \equiv 7 \pmod{8},
	\end{cases}
\]
while the coefficient of $ q^n $ on the right-hand side equals
\[
	\frac{1}{8}\psi_4(\chi_{-4},n)+\begin{cases}
		\frac{1}{4}\sigma(n)&\text{if }n \equiv 1 \pmod{4},\\
		\frac{1}{3}\sigma(n)&\text{if }n \equiv 3 \pmod{8},\\
		\frac{1}{2}\sigma(n)&\text{if }n \equiv 7 \pmod{8}.
	\end{cases}
\]
We have $ [\SL_2(\Z):\Gamma_0(4\cdot 64)] = 384 $ by \Cref{lemmaInd}, hence we have to check the first $ \frac{384}{6} = 64 $ coefficients to conclude that the two modular forms are equal by \Cref{thmSturm}.

We also prove the second formula. By \Cref{lemmaHLambda}, the function
\[
	\left(\H\theta_{1,8}+\frac{1}{2}\Lambda_{1,1,8}\right)\bigg\vert U_4
\]
is a quasimodular form of weight $ 2 $ on $ \Gamma_0(4\cdot 8^2)\cap \Gamma_1(8) $. By \Cref{lemmaLambda4},
\[
	\Lambda_{1,1,8}\vert U_4\otimes\chi_{8,0} = 0,
\]
hence if we let $ \Gamma(8) := \Gamma_0(4\cdot 64)\cap\Gamma_1(8) $, then by \Cref{lemmaS},
\[
	(\H\theta_{1,8})\vert U_4\otimes\chi_{8,0} \in M_2(\Gamma(8)).
\]
By \Cref{lemmaD}, $ D\otimes\chi_{8,0} \in M_2(\Gamma_0(64)) \subset M_2(\Gamma(8)) $. By \Cref{lemmaCM} (3), $ \Psi_2(\chi_{-4},\tau) \in S_2(\Gamma_0(64),\legendre{2}{\cdot}) $. We have $ M_2(\Gamma_0(64),\legendre{2}{\cdot}) \subset M_2(\Gamma(8)) $ because if $ \gamma = \begin{pmatrix}a&b\\c&d\end{pmatrix} \in \Gamma(8) $, then $ d \equiv 1 \pmod{8} $, hence $ \legendre{2}{d} = 1 $. Thus, both sides belong to $ M_2(\Gamma(8)) $ and the rest of the proof is similar.
\end{proof}

\begin{theorem}
\label{thmH81}
Let $ p \geq 3 $ be a prime. For $ p \equiv 1 \pmod{4} $, let $ p = x^2+4y^2 $, where $ x, y \in \Z $. If $ \chi_{-4} $ denotes the non-principal character modulo $ 4 $, then
\[
	H_{m,8}(p) =
	\setlength{\arraycolsep}{0pt}
	\renewcommand{\arraystretch}{1.2}
	\left\{\begin{array}{l @{\quad} l @{\quad} l}
		\frac{p+1}{4}+\frac{1}{2}\chi_{-4}(x)x&\text{if }m \equiv 0 \pmod{8},&p \equiv 1 \pmod{4},\\
		\frac{p+1}{3}&\text{if }m \equiv 0 \pmod{8},&p \equiv 3 \pmod{8},\\
		\frac{p-3}{2}&\text{if }m \equiv 0 \pmod{8},&p \equiv 7 \pmod{8},\\
		\frac{5p-7}{12}&\text{if }m \equiv 2, 6 \pmod{8},&p \equiv 1 \pmod{4},\\
		\frac{p+1}{4}&\text{if }m \equiv 2,6 \pmod{8},&p \equiv 3 \pmod{4},\\
		\frac{p+1}{4}-\frac{1}{2}\chi_{-4}(x)x&\text{if }m \equiv 4 \pmod{8},&p \equiv 1 \pmod{4},\\
		\frac{p-3}{2}&\text{if }m \equiv 4 \pmod{8},&p \equiv 3 \pmod{8},\\
		\frac{p+1}{3}&\text{if }m \equiv 4 \pmod{8},&p \equiv 7 \pmod{8}.
	\end{array}\right.
\]
\end{theorem}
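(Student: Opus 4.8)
The plan is to derive \Cref{thmH81} from \Cref{thmMod8} in exactly the way \Cref{thmH6} was derived from \Cref{thmMod6}: for each residue class of $m$ modulo $8$ I would extract the $q^p$-coefficient on both sides of the relevant identity in \Cref{thmMod8} and solve for $H_{m,8}(p)$. Using the symmetry $H_{m,8}(p)=H_{-m,8}(p)$ recorded just before \Cref{thmMod6}, it is enough to treat $m\equiv 0,2,4\pmod 8$ (the class $m\equiv 6$ coinciding with $m\equiv 2$), and these correspond respectively to the first, third, and fifth displayed identities of \Cref{thmMod8}. Since $p\geq 3$ is prime, $\gcd(p,8)=1$, so the $q^p$-coefficient of $(\H\theta_{m,8})\vert U_4\otimes\chi_{8,0}$ is simply $H_{m,8}(p)$.

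Before comparing coefficients I would record the elementary evaluations of the remaining building blocks at $n=p$. First, $\sigma(p)=p+1$, so $D\vert S_{4,1}$, $D\vert S_{4,3}$, $D\vert S_{8,3}$, $D\vert S_{8,7}$ have $q^p$-coefficient $p+1$ when $p$ lies in the sieved class and $0$ otherwise. Second, $T_{1,1,4}$ is supported on exponents that are perfect squares, and $p$ is not a square, so it contributes nothing to the $q^p$-coefficient. Third, the only divisor of $p$ strictly below $\sqrt p$ is $d=1$, and $1\equiv 1\pmod 4$, hence $g_{1,1,4}(p)=1$; consequently each of $G_{1,1,4}\vert S_{4,1}$, $G_{1,1,4}\vert S_{8,3}$, $G_{1,1,4}\vert S_{8,7}$ has $q^p$-coefficient $1$ precisely when $p$ lies in the class being sieved and $0$ otherwise. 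Fourth, and most importantly, I would evaluate $\psi_4(\chi_{-4},p)$: if $p\equiv 3\pmod 4$ then $p$ has no representation $x^2+4y^2$ (as $x^2+4y^2\equiv 0,1\pmod 4$) and $\psi_4(\chi_{-4},p)=0$, whereas if $p\equiv 1\pmod 4$ the representation $p=x^2+4y^2$ is unique up to sign with $x$ odd and $y\neq 0$, so the four choices $(\pm x,\pm y)$ are distinct and, using $\chi_{-4}(-1)=-1$, each contributes $\chi_{-4}(x)x$, giving $\psi_4(\chi_{-4},p)=4\chi_{-4}(x)x$ and hence a $q^p$-coefficient of $\frac12\chi_{-4}(x)x$ in $\frac14\Psi_4(\chi_{-4},\cdot)$.

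With these facts the theorem becomes pure bookkeeping: for each of $m\equiv 0,2,4\pmod 8$ I would split into subcases according to $p\bmod 8$, note which of the sieving operators are active, read off the right-hand coefficient, and subtract the $G$- and $T$-corrections present on the left. For example, for $m\equiv 0\pmod 8$ and $p\equiv 7\pmod 8$ the left side has $q^p$-coefficient $H_{0,8}(p)+2g_{1,1,4}(p)=H_{0,8}(p)+2$ while the right side gives $\frac12\sigma(p)=\frac{p+1}{2}$, so $H_{0,8}(p)=\frac{p-3}{2}$; for $m\equiv 2\pmod 8$ and $p\equiv 1\pmod 4$ one gets $H_{2,8}(p)+1=\frac{5}{12}\sigma(p)$, i.e.\ $H_{2,8}(p)=\frac{5p-7}{12}$; and for $m\equiv 4\pmod 8$ and $p\equiv 1\pmod 4$ one gets $H_{4,8}(p)=\frac14\sigma(p)-\frac12\chi_{-4}(x)x=\frac{p+1}{4}-\frac12\chi_{-4}(x)x$. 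The remaining cases are handled identically. I do not expect a genuine obstacle, since everything follows formally from \Cref{thmMod8}; the only points demanding care are the factor of $4$ arising from the sign symmetry in the evaluation of $\psi_4(\chi_{-4},p)$, correctly matching each residue class of $p\bmod 8$ to the subset of active sieving operators, and remembering the constant shift of $-2$ (resp.\ $-1$) produced by the $g_{1,1,4}(p)=1$ correction term.
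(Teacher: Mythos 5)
Your proposal is correct and follows exactly the paper's route: extract the $q^p$-coefficient from the relevant identities of \Cref{thmMod8}, using $\sigma(p)=p+1$, the vanishing of $T_{1,1,4}$ at non-square exponents, $g_{1,1,4}(p)=1$, and $\psi_4(\chi_{-4},p)=4\chi_{-4}(x)x$ coming from the four sign choices in the essentially unique representation $p=x^2+4y^2$. The paper only writes out the case $m\equiv 0\pmod 8$ and leaves the rest as analogous; your bookkeeping for $m\equiv 2,4\pmod 8$ matches and is sound.
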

\begin{proof}
We prove the formula in the case $ m \equiv 0 \pmod{8} $. It follows from \Cref{thmMod8} that if $ p \equiv 1 \pmod{4} $, then
\[
	H_{0,8}(p) = \frac{1}{4}\sigma(p)+\frac{1}{8}\psi_4(\chi_{-4},p) = \frac{p+1}{4}+\frac{1}{2}\chi_{-4}(x)x,
\]
if $ p \equiv 3 \pmod{8} $, then
\[
	H_{0,8}(p) = \frac{1}{3}\sigma(p) = \frac{p+1}{3},
\]
and if $ p \equiv 7 \pmod{8} $, then
\[
	H_{0,8}(p)+2 = \frac{1}{2}\sigma(p) = \frac{p+1}{2}.\qedhere
\]
\end{proof}

\begin{theorem}
\label{thmH82}
Let $ p \geq 3 $ be a prime. For $ p \equiv 1, 3 \pmod{8} $, let $ p = x^2+2y^2 $, where $ x, y \in \Z $. If $ \chi_{-4} $ denotes the non-principal character modulo $ 4 $, then
\[
	H_{m,8}(p) =
	\setlength{\arraycolsep}{0pt}
	\renewcommand{\arraystretch}{1.2}
	\left\{\begin{array}{l @{\quad} l @{\quad} l}
		\frac{p+1}{6}+\frac{1}{3}\chi_{-4}(x)x&\text{if }m \equiv 1,7 \pmod{8},&p \equiv 1,3 \pmod{8},\\
		\frac{p+1}{6}&\text{if }m \equiv 1,3,5,7 \pmod{8},&p \equiv 5,7 \pmod{8},\\
		\frac{p+1}{6}-\frac{1}{3}\chi_{-4}(x)x&\text{if }m \equiv 3,5 \pmod{8},&p \equiv 1,3 \pmod{8}.
	\end{array}\right.
\]
\end{theorem}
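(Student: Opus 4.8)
The plan is to follow the template already used in the proofs of \Cref{thmH6} and \Cref{thmH81}, reading off the formula directly from \Cref{thmMod8}. First I would invoke the symmetry $H_{m,M}(n) = H_{-m,M}(n)$ recorded just before \Cref{thmMod6} to reduce to the residues $m \equiv 1 \pmod 8$ and $m \equiv 3 \pmod 8$, since then $H_{7,8}(p) = H_{1,8}(p)$ and $H_{5,8}(p) = H_{3,8}(p)$. These two residues are governed by the identities for $\theta_{1,8}$ and $\theta_{3,8}$ in \Cref{thmMod8}, namely $(\H\theta_{1,8})\vert U_4 \otimes \chi_{8,0} = \tfrac16 D\otimes\chi_{8,0} + \tfrac16\Psi_2(\chi_{-4},\cdot)$ and $(\H\theta_{3,8})\vert U_4 \otimes \chi_{8,0} = \tfrac16 D\otimes\chi_{8,0} - \tfrac16\Psi_2(\chi_{-4},\cdot)$.

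Next I would compare the coefficient of $q^p$ on both sides for an odd prime $p$. Since $\gcd(p,8)=1$ we have $\chi_{8,0}(p)=1$, so the $q^p$-coefficient of the left-hand side is $H_{1,8}(p)$, respectively $H_{3,8}(p)$; on the right-hand side the $q^p$-coefficient of $\tfrac16 D\otimes\chi_{8,0}$ is $\tfrac16\sigma(p) = \tfrac{p+1}{6}$, while that of $\tfrac16\Psi_2(\chi_{-4},\cdot)$ is $\tfrac{1}{12}\psi_2(\chi_{-4},p)$ by the definition of $\Psi_2$. Hence $H_{1,8}(p) = \tfrac{p+1}{6} + \tfrac{1}{12}\psi_2(\chi_{-4},p)$ and $H_{3,8}(p) = \tfrac{p+1}{6} - \tfrac{1}{12}\psi_2(\chi_{-4},p)$, and the whole statement reduces to evaluating $\psi_2(\chi_{-4},p) = \sum_{x^2+2y^2=p}\chi_{-4}(x)x$.

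The core step is this elementary evaluation. If $p \not\equiv 1,3 \pmod 8$, then by the classical fact recalled in the introduction $p$ has no representation $p = x^2+2y^2$, the sum is empty, and $\psi_2(\chi_{-4},p) = 0$; this yields the case $H_{m,8}(p) = \tfrac{p+1}{6}$ for $p \equiv 5,7 \pmod 8$. If $p \equiv 1,3 \pmod 8$, then (again by the classical facts in the introduction, which rest on $h(-8)=1$) there is a unique solution with $x,y \in \Z_{\geq 0}$; since $p$ is an odd prime, both $x$ and $y$ are positive and $x$ is odd, for otherwise $p$ would be even, and the complete solution set is $\{(\pm x, \pm y)\}$. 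As $\chi_{-4}$ is odd, $\chi_{-4}(-x)(-x) = \chi_{-4}(x)x$, so all four points contribute the same amount and $\psi_2(\chi_{-4},p) = 4\,\chi_{-4}(x)x$; in particular this is independent of the chosen sign of $x$, so the quantity $\chi_{-4}(x)x$ attached to $p$ in the statement is well defined. Substituting $\tfrac{1}{12}\psi_2(\chi_{-4},p) = \tfrac13\chi_{-4}(x)x$ gives $H_{1,8}(p) = \tfrac{p+1}{6} + \tfrac13\chi_{-4}(x)x$ for $m \equiv 1,7 \pmod 8$ and $H_{3,8}(p) = \tfrac{p+1}{6} - \tfrac13\chi_{-4}(x)x$ for $m \equiv 3,5 \pmod 8$, completing the three cases.

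I do not expect a genuine obstacle here: \Cref{thmMod8} supplies the analytic content and only the arithmetic of $\psi_2(\chi_{-4},p)$ remains. The points that need care are the parity and character bookkeeping --- that $x$ is forced to be odd so $\chi_{-4}(x)\neq 0$, and that the sign of $x$ does not affect $\chi_{-4}(x)x$ --- together with a quick sanity check at the smallest relevant prime $p = 3 \equiv 3 \pmod 8$, where $x = y = 1$ and the formula correctly returns $H_{1,8}(3) = \tfrac23 + \tfrac13 = 1 = H(11)$.
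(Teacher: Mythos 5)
Your proposal is correct and follows the same route as the paper: read off the coefficient of $q^p$ from the identities for $\theta_{1,8}$ and $\theta_{3,8}$ in \Cref{thmMod8}, reduce the remaining residues by the symmetry $H_{m,8}=H_{-m,8}$, and evaluate $\psi_2(\chi_{-4},p)=4\chi_{-4}(x)x$ via the (essentially unique) representation $p=x^2+2y^2$. You in fact spell out the parity and sign bookkeeping for $\psi_2(\chi_{-4},p)$ that the paper leaves implicit, so nothing is missing.
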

\begin{proof}
We prove the formula in the case $ m \equiv 1 \pmod{8} $. By \Cref{thmMod8},
\[
	H_{1,8}(p) = \frac{1}{6}\sigma(p)+\frac{1}{12}\psi_2(\chi_{-4},p) = \frac{p+1}{6}+\begin{cases}
		\frac{1}{3}\chi_{-4}(x)x&\text{if }p\equiv 1,3 \pmod{8},\\
		0&\text{if }p \equiv 5, 7 \pmod{8}.
	\end{cases}\qedhere
\]
\end{proof}

\end{document}